\newtheorem{theorem}{Theorem}[section]
\newtheorem{lemma}[theorem]{Lemma}
\newtheorem{proposition}[theorem]{Proposition}
\newtheorem{remark}{Remark}[section]
\newtheorem{definition}{Definition}[section]
\newenvironment{proof}{\vspace{1ex}\noindent{\bf Proof:} }{
\mbox{}\hspace{\fill}\rule{1ex}{1ex}\vspace{1.5ex}}
\begin{document}

\title{Dispersion Models for Extremes}
\author{Bent J\o rgensen$^{1}$ $\cdot $ Yuri Goegebeur$^{1}$ $\cdot $ Jos%
\'{e} Ra\'{u}l Mart\'{\i}nez$^{2}$ }
\maketitle

\footnotetext[1]{%
Bent J\o rgensen (corresponding author), Yuri Goegebeur \newline
Department of Statistics, University of Southern Denmark, Campusvej 55,
DK-5230 Odense M, Denmark\newline
e-mail: bentj@stat.sdu.dk, yuri.goegebeur@stat.sdu.dk}\footnotetext[2]{%
Jos\'{e} Ra\'{u}l Mart\'{\i}nez\newline
FAMAF, Universidad Nacional de C\'{o}rdoba, Ciudad Universitaria, 5000 C\'{o}%
rdoba, Argentina\newline
e-mail: jmartine@mate.uncor.edu}

\begin{quotation}
\noindent \textbf{Abstract} We propose extreme value analogues of natural
exponential families and exponential dispersion models, and introduce the
slope function as an analogue of the variance function. The set of quadratic
and power slope functions characterize well-known families such as the
Rayleigh, Gumbel, power, Pareto, logistic, negative exponential, Weibull and
Fr\'{e}chet. We show a convergence theorem for slope functions, by which we
may express the classical extreme value convergence results in terms of
asymptotics for extreme dispersion models. The main idea is to explore the
parallels between location families and natural exponential families, and
between the convolution and minimum operations. \medskip \newline
\textbf{Keywords }Convergence of extremes $\cdot $\textbf{\ }Extreme
dispersion model~$\cdot $ Generalized extreme value distribution $\cdot $
Hazard location family $\cdot $ Power slope function $\cdot $ Quadratic
slope function\medskip \newline
\textbf{AMS 2000 Subject Classification} Primary---62E10, 60F99, 60E10;
Secondary---62E20, 62N99
\end{quotation}

\section{Introduction}

In a seminal paper Morris (1982) asked the following question: what do the
normal, Poisson, gamma, binomial, and negative binomial distributions have
in common that makes them so special? His answer was that they are all
natural exponential families with quadratic variance functions. This idea
has wide-ranging practical and theoretical ramifications, in particular for
generalized linear models (McCullagh and Nelder, 1989) and exponential
dispersion models (J\o rgensen, 1987).

Many subsequent authors have used the variance function as a
characterization and convergence tool for natural exponential families and
exponential dispersion models, cf.~J\o rgensen (1997), Casalis (2000) and
references therein. In particular, Tweedie (1984) and several authors
independently of him (Morris, 1981; Hougaard, 1986; Bar-Lev and Enis, 1986),
proposed and investigated the class of power variance functions,
corresponding to what we now call the Tweedie class of exponential
dispersion models. J\o rgensen et al.~(1994) showed that the Tweedie models
appear as limits in a class of convergence results for exponential
dispersion models, extending certain classical stable convergence results.

These ideas appear, at first sight, to have little relevance for extreme
value theory. Echoing Morris (1982) we may ask, however, what distributions
like the Rayleigh, Gumbel, power, Pareto, logistic and negative exponential
have in common that makes them so special in the context of extremes? Also,
is there an extreme value analogue of power variance functions, perhaps
related to the Weibull and Fr\'{e}chet distributions? In the present paper
we develop an extreme value dispersion model framework in the spirit of J\o %
rgensen (1997), leading to constructive answers to these questions. In
particular we find a close parallel between the above-mentioned Tweedie
convergence results and the classical extreme convergence results by Fisher
and Tippett (1928) and Gnedenko (1943). See Coles (2001), Kotz and Nadarajah
(2002) and Beirlant et al.~(2004) for background material on extremes.

In Section \ref{CandD} we introduce the rate and slope of a distribution as
analogues of the mean and variance, respectively. In Section \ref{HLfam}
hazard location families and slope functions are introduced as analogues of
natural exponential families and variance functions, respectively. In
Section \ref{secXD} we introduce extreme dispersion models as analogues of
exponential dispersion models. In Section \ref{secQHS} we classify quadratic
slope functions in a manner similar to Morris' (1982) classification of
quadratic variance functions. In Section \ref{secGEV} we draw a parallel
between generalized extreme value distributions and Tweedie models, having
power slope functions and power variance functions, respectively. In Section %
\ref{secConv} a general convergence result for slope functions is shown,
leading to a new proof of the classical extreme value convergence results,
now set in the extreme dispersion model setting. Finally, in Section \ref%
{SecExponential} we consider characterization and convergence for
exponential slope functions.

\section{Basic framework\label{CandD}}

We now introduce the basic setup for the paper, define the notions of rate
and slope for a real random variable $Y$, and show that they are analogues
of the mean and variance, respectively. It is convenient to use familiar
terms from lifetime analysis such as survival function and hazard function,
although $Y$ is not restricted to be positive. It is also convenient to use
minimum (min) rather than the conventional maximum. Results for maxima may
be obtained by a reflection in the usual way, see Beirlant et al.~(2004,
p.~46).

\subsection{Survival, hazard and integrated hazard}

We assume that the survival function $G(y)=P(Y\geq y)$ is twice continuously
differentiable on the support $\mathcal{C}=(a,b)\subseteq \mathbb{R}$,
continuous at $a$, and possibly discontinuous at $b$. Let $\mathcal{G}$
denote the set of all $G$ such that the density function $f=-G^{\prime }$ is
strictly positive on $\mathcal{C}$, and let $\mathcal{G}_{0}$ denote the
subset of $\mathcal{G}$ for which $0\in \mathcal{C}$.

When $G(b)>0$ we talk about \emph{right censoring} at $b$. In particular we
allow a positive probability mass $G(\infty )$ at $b=\infty $, in which case 
$G$ represents an \emph{improper} distribution. In survival analysis $%
G(\infty )$ is the probability that an individual never experiences the
event in question, see e.g. Aalen (1988).

We define the \emph{integrated hazard function} $H:\mathbb{R}\rightarrow
\lbrack 0,\infty ]$ and the \emph{hazard} \emph{function} $h:\mathbb{R}%
\rightarrow \lbrack 0,\infty ]$ corresponding to $G$ by $H(y)=-\log G(y)$
and $h(y)=H^{\prime }(y)$, respectively. It is understood that both $H$ and $%
h$ are $0$ to the left and $\infty $ to the right of $\mathcal{C}$, except
that $H(b)$ is finite if $G(b)>0$. With these conventions the following
relationship holds for all $y\in \mathbb{R}$, 
\begin{equation}
H(y)=\int_{-\infty }^{y}h(x)\,dx\text{.}  \label{H(y)}
\end{equation}

\subsection{Rate, slope and semiinvariants}

We now define the rate and slope for $Y$, and make the connection with the
min operation.

By way of motivation, let us recall the derivation of the mean and variance
from the moment generating function. Let the random variable $Y$ have moment
generating function $M(t)=\mathrm{E}\left( e^{tY}\right) $, with domain $%
\Theta =\left\{ t\in \mathbb{R}:M(t)<\infty \right\} $ such that $0\in 
\mathrm{int\,}\Theta $. Consider the \emph{cumulant generating function} $%
\kappa =\log M$ whose derivatives at zero $\kappa ^{(i)}(0)$ are the
cumulants. The mean and variance, in particular, are given by 
\begin{equation}
\mathrm{E}(Y)=\tau (0)\qquad \text{and}\qquad \mathrm{Var}(Y)=\tau ^{\prime
}(0)\text{,}  \label{evar}
\end{equation}%
where $\tau =\kappa ^{\prime }$ is the \emph{mean value mapping,} which is
strictly increasing on $\mathrm{int}\Theta $.

We now propose $G(y)=\mathrm{E}\left( 1_{Y\geq y}\right) $ as an analogue of
the moment generating function $M(t)=\mathrm{E}\left( e^{tY}\right) $, which
in turn makes $H$ and $h$ analogues of the cumulant generating function $%
\kappa $ and mean value mapping $\tau $, respectively. By analogy with (\ref%
{evar}), we define the \emph{rate}\textsl{\ }$r$ and \emph{slope} $s$ for a
random variable $Y$ with survival function $G\in \mathcal{G}_{0}$ by%
\begin{equation*}
r(Y)=h(0)\text{ and }s(Y)=h^{\prime }(0)\text{,}
\end{equation*}%
respectively. Unlike the variance, however, the slope may be negative as
well as positive, and the rate decreases (increases) under translation in
the IFR (DFR) case. In general we define the $i$th \emph{semiinvariant} by $%
k_{i}(Y)=H^{(i)}(0)$ for $i\geq 1$, provided the derivatives exist,
analogously to the cumulants. This terminology alludes to T.N. Thiele's name 
\emph{half-invariants} for the cumulants, cf.~Lauritzen (2002, p.~207).

Letting $\mu =r(Y)$, the slope of $Y$ may be written as follows: 
\begin{equation}
s(Y)=\mu \left\{ \mu -g^{\prime }\left( 0\right) \right\} \text{,}
\label{varform}
\end{equation}%
where $g=-\log f$. This result is somewhat analogous to the result $\mathrm{%
Var}(Y)=\mathrm{E}(Y^{2})-\mathrm{E}^{2}(Y)$ for the variance, see also
Section \ref{secExpo}.

Like the cumulants, the semiinvariants satisfy a scale equivariance property 
$k_{i}(cY)=c^{-i}k_{i}(Y)$ for $c>0$, which follows from the fact that $cY$
has integrated hazard function $H(y/c)$. In particular the rate and slope
satisfy 
\begin{equation}
r(cY)=c^{-1}r(Y)\text{ and }s(cY)=c^{-2}s(Y)\text{.}  \label{equivariance}
\end{equation}%
The rate is \emph{not}, however, translation equivariant, nor is the slope
translation invariant, but instead satisfy $r(c+Y)=h(-c)$ and $%
s(c+Y)=h^{\prime }(-c)$ for $c\in \mathbb{R}$.

The min of $n$ independent variables $Y_{i}$ has integrated hazard function $%
H_{1}(y)+\cdots +H_{n}(y)$, so the semiinvariants are additive with respect
to the min operation $\wedge $, in much the same way that the cumulants are
additive with respect to convolution. In particular%
\begin{equation}
r\left( \bigwedge_{i=1}^{n}Y_{i}\right) =\sum_{i=1}^{n}r(Y_{i})\text{ and }%
s\left( \bigwedge_{i=1}^{n}Y_{i}\right) =\sum_{i=1}^{n}s(Y_{i})\text{.}
\label{additivity}
\end{equation}%
We denote the scaled min of $n$ independent and identically distributed
(i.i.d.) variables $Y_{i}$ by 
\begin{equation}
\hat{Y}_{n}=n\bigwedge_{i=1}^{n}Y_{i}\text{,}  \label{heware}
\end{equation}%
which in many ways behaves like the sample mean. Thus, combining (\ref%
{equivariance}) and (\ref{additivity}) yields%
\begin{equation}
r\left( \hat{Y}_{n}\right) =\mu \text{ and }s\left( \hat{Y}_{n}\right) =%
\frac{\varsigma }{n}\text{,}  \label{mevare}
\end{equation}%
where $\mu $ is the rate and $\varsigma $ the slope of $Y_{i}$. Also, the
exponential distribution is invariant under the transformation (\ref{heware}%
), behaving like a constant does under averaging. This suggest a law of
large numbers involving the exponential distribution, as we shall now see.

\subsection{Exponential distribution\label{secExpo}}

Let $E_{\mu }$ denote an exponential variable with rate $\mu >0$. By a
shifted exponential variable we mean $a+E_{\mu }$ with $a<0$, whose support
includes $0$. For such a variable we find 
\begin{equation}
r\left( a+E_{\mu }\right) =\mu \text{ and }s\left( a+E_{\mu }\right) =0\text{%
,}  \label{sevare}
\end{equation}%
parallel to the form for the mean and variance of a constant. In the
notation of (\ref{varform}), note that $g^{\prime }(0)=\mu $ for the
variable $a+E_{\mu }$, so (\ref{varform}) implies that the slope is a signed
measure of the deviation of $Y$ from exponentiality, in much the same way
that the variance is a measure of the deviation of $Y$ from being constant.

Since the exponential distribution hence plays the role of constant in the
present setup, it is not surprising that there is a law of large numbers, as
suggested by (\ref{mevare}) and (\ref{sevare}), that involves convergence to
the exponential distribution. In fact, the scaled min $\hat{Y}_{n}$, after
left truncation at $0$, has survival function given, for $y>0$, by%
\begin{equation}
\frac{G^{n}\left( y/n\right) }{G^{n}\left( 0\right) }=\exp \left\{ -y\mu -%
\frac{\varsigma }{2n}y^{2}+o(n^{-1})\right\} \text{ as }n\rightarrow \infty 
\text{,}  \label{conditional}
\end{equation}%
converging to an exponential distribution with rate $\mu $. Here left
truncation at $0$ means conditioning on the event $\hat{Y}_{n}>0$. The
quadratic term suggests a central limit theorem. By removing the term $-y\mu 
$, corresponding to an exponential component, and rescaling we obtain for $%
y>0$ 
\begin{equation}
\frac{G^{n}\left( y/\sqrt{n}\right) }{G^{n}\left( 0\right) }e^{y\mu \sqrt{n}%
}=\exp \left\{ -\frac{\varsigma }{2}y^{2}+o(1)\right\} \text{ as }%
n\rightarrow \infty \text{.}  \label{CLT}
\end{equation}%
Provided $\varsigma >0$, this gives an asymptotic Rayleigh distribution,
which hence plays the role of the normal distribution in the present setup.
Remark \ref{minexp} below makes precise the idea of removing an exponential
component.

\section{Hazard location families\label{HLfam}}

\subsection{Motivation}

We now introduce hazard location families, and show that each such family is
characterized by its slope function, just like a natural exponential family
is characterized by its variance function.

Recall that the \emph{variance function} is defined on $\Omega =\tau (%
\mathrm{int\,}\Theta )$ by $V(\mu )=\tau ^{\prime }\left( \tau ^{-1}(\mu
)\right) $, where $\tau $ is the mean value mapping defined in connection
with (\ref{evar}). As pointed out by Morris (1982), $V$ characterizes is the
distribution of $Y$ up to an exponential tilting. This follows since given $%
V $, the inverse $\tau ^{-1}$ satisfies the differential equation 
\begin{equation}
\frac{d\tau ^{-1}(\mu )}{d\mu }=\frac{1}{V(\mu )}\text{,}  \label{tauv}
\end{equation}%
from which $\tau ^{-1}(\mu )$ may be recovered up to an additive constant $%
-\theta $, say, corresponding to an \emph{exponential tilting} of the
density $f$ (not necessarily with respect to Lebesgue measure) 
\begin{equation}
f(y;\theta )=f(y)\exp \left\{ y\theta -\kappa (\theta )\right\} \text{.}
\label{ne}
\end{equation}%
This is a natural exponential family (NEF), and (\ref{ne}) has mean $\mu
=\tau (\theta )$ and variance $V(\mu )$, cf.~J\o rgensen (1997, Ch.~2).

\subsection{Definition}

Our analogy implies that the hazard function $h$ should be analogous to the
mean value mapping $\tau $. In order to make the analogy complete, however,
we need $h$, like $\tau $, to be monotone. We thus consider from now on
survival functions in $\mathcal{G}$ with \emph{monotone hazard rate}, in the
sense that $h$ is strictly monotone on $\mathcal{C}$, either increasing
(IFR) or decreasing (DFR). This subset of $\mathcal{G}$ is denoted $%
\overline{\mathcal{G}}$, and we let $\overline{\mathcal{G}}_{0}=\overline{%
\mathcal{G}}\cap \mathcal{G}_{0}$.

\begin{remark}
\label{Remark1}In the DFR case it is necessary that $a>-\infty $ in order
for the integral (\ref{H(y)}) to converge at $a$. In the IFR case $G$ is
always proper, since if $h$ is increasing on $(a,\infty )$ then the integral
(\ref{H(y)}) diverges at $\infty $.
\end{remark}

\begin{remark}
\label{Remark2}Consider the case $Y=\log T$, where $T$ is a positive
survival time, say. Since then $\mathcal{C}=\mathbb{R}$, making $a=-\infty $%
, only IFR is possible. The derivative of the hazard function for $T$ is 
\begin{equation*}
\frac{h^{\prime }(\log t)-h(\log t)}{t^{2}}\text{.}
\end{equation*}%
Hence $T$ need not have monotone hazard rate, even though $Y$ does. In this
sense, the assumption of monotone hazard rate is less of a restriction when
modelling log survival times.
\end{remark}

We now define an analogue of the variance function. For given $G\in 
\overline{\mathcal{G}}$ we let $\Psi =h(\mathcal{C})$ (an open interval),
and define the\textbf{\ }\emph{slope function} $v:\Psi \rightarrow \mathbb{R}%
_{\pm }$ by 
\begin{equation}
v(\mu )=h^{\prime }\left( h^{-1}(\mu )\right) \text{.}  \label{three}
\end{equation}%
Here $v$ maps into $\mathbb{R}_{+}$ in the IFR case and into $\mathbb{R}_{-}$
in the DFR case. Analogously to (\ref{tauv}) we find that the inverse hazard
function $h^{-1}$ satisfies 
\begin{equation}
\frac{dh^{-1}(\mu )}{d\mu }=\frac{1}{v(\mu )}\text{.}  \label{hv}
\end{equation}

\begin{proposition}
\label{hazard}The slope function $v$ with domain $\Psi $ characterizes the
location family $G(\cdot -\theta )$ with $\theta \in \mathbb{R}$ among all
location families within $\overline{\mathcal{G}}$.
\end{proposition}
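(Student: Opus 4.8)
The plan is to mimic the classical argument (as in Morris 1982 and J\o rgensen 1997, Ch.~2) showing that a variance function determines a natural exponential family up to exponential tilting, replacing exponential tilting by location shifts. First I would observe that within $\overline{\mathcal{G}}$ a location family $G(\cdot-\theta)$ has, at parameter value $\theta$, hazard function $h_\theta(y)=h(y-\theta)$, hence inverse hazard function $h_\theta^{-1}(\mu)=\theta+h^{-1}(\mu)$ on the common range $\Psi=h(\mathcal C)$, which is independent of $\theta$. Consequently $h_\theta'(h_\theta^{-1}(\mu))=h'(h^{-1}(\mu))=v(\mu)$, so the entire location family shares a single slope function $v$ with domain $\Psi$. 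This is the easy direction: the slope function is a location invariant.

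For the converse — that $v$ together with $\Psi$ pins the family down — I would argue as follows. Suppose $G_1,G_2\in\overline{\mathcal G}$ both give rise to the same slope function $v$ on the same interval $\Psi$. Then their inverse hazard functions $h_1^{-1}$ and $h_2^{-1}$ are both defined on $\Psi$ and, by (\ref{hv}), both satisfy the ordinary differential equation $d h^{-1}(\mu)/d\mu = 1/v(\mu)$ on $\Psi$. Since the right-hand side does not depend on the unknown function and $1/v$ is continuous (indeed, $v$ is continuous and nonvanishing on $\Psi$ because $h$ is $C^2$ with $h'\neq 0$ throughout $\mathcal C$ by strict monotonicity), the solutions differ only by an additive constant: $h_2^{-1}(\mu)=h_1^{-1}(\mu)+\theta$ for some $\theta\in\mathbb R$ and all $\mu\in\Psi$. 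Inverting, $h_2(y)=h_1(y-\theta)$ on the appropriately shifted support, and then integrating via (\ref{H(y)}) gives $H_2(y)=H_1(y-\theta)$, hence $G_2(y)=G_1(y-\theta)$. Thus $G_2$ lies in the location family generated by $G_1$, which is exactly the claim.

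The one point requiring a little care — and the main obstacle — is bookkeeping at the endpoints of the support, i.e. making sure that the ODE argument on the open interval $\Psi$ really does recover the full survival function and its support, and not just $h$ on the interior. Here I would invoke the standing conventions established after (\ref{H(y)}): $H$ and $h$ are $0$ to the left and $\infty$ to the right of $\mathcal C$, with $H(b)$ finite only in the right-censored case $G(b)>0$. Because $\Psi=h(\mathcal C)$ is the same open interval for $G_1$ and $G_2$, and $h_1,h_2$ are strictly monotone homeomorphisms of $\mathcal C$ onto $\Psi$, the relation $h_2=h_1(\cdot-\theta)$ forces $\mathcal C_2=\mathcal C_1+\theta$, and the boundary behaviour (including any atom at the right endpoint, whose mass $G(b)=\exp\{-H(b)\}$ is determined by $\lim_{\mu}h^{-1}$ along with the constant of integration) transfers automatically under the shift. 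So once the ODE step is in place, the endpoint matching is essentially forced, and the proof is complete.
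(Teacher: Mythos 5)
Your proposal is correct and follows essentially the same route as the paper: both rest on the observation that any two solutions of the differential equation $dh^{-1}(\mu)/d\mu = 1/v(\mu)$ on $\Psi$ differ by an additive constant $\theta$, so that inversion yields the hazard functions $h(\cdot-\theta)$ and hence the location family $G(\cdot-\theta)$. You simply spell out the integration back to $H$ and $G$ and the endpoint bookkeeping that the paper's one-line proof leaves implicit.
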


\begin{proof}
Given $v$, the solution to (\ref{hv}) is $\theta +h^{-1}(\mu )$, where $%
\theta \in \mathbb{R}$ is arbitrary. By inversion, we obtain the hazard
function $h(\cdot -\theta )$ corresponding to the location family $G(\cdot
-\theta )$.
\end{proof}

To complete the analogy with natural exponential families we restrict the
domain of $\theta $ to $-\mathcal{C}$, such that $G(\cdot -\theta )\in 
\overline{\mathcal{G}}_{0}$. Note that the rate and slope for $G(\cdot
-\theta )$ are $\mu =h\left( -\theta \right) $ and $h^{\prime }\left(
-\theta \right) =v(\mu )$. This leads to the following definition.

\begin{definition}
\label{defHL}The \emph{hazard location family} $\left\{ \mathrm{HL}(\mu
):\mu \in \Psi \right\} \subseteq \overline{\mathcal{G}}_{0}$ generated from 
$G\in \overline{\mathcal{G}}$ is defined by the family of survival functions
with support $\mathcal{C}-$ $h^{-1}(\mu )$ given by 
\begin{equation}
y\mapsto G\left\{ y+h^{-1}(\mu )\right\} \text{.}  \label{G(yh)}
\end{equation}
\end{definition}

Note that the definition of $v$ in (\ref{three}) is independent of the
representation (\ref{G(yh)}) used for the family, so the slope function
represents an intrinsic property of the family.

\begin{table}[tbp]
\caption{The main quadratic hazard slope families and associated NEFs. GHS
is the generalized hyperbolic secant family.}
\label{quadratic}\centering%
\begin{tabular}{|c|c|c|c|c|c|}
\hline
HL$(\mu )$ & $G(y)$ & $\mathcal{C}$ & $v(\mu )$ & $\Psi $ & NEF \\ \hline
Rayleigh & $\exp \left( -y^{2}/2\right) $ & $\mathbb{R}_{+}$ & $1$ & $%
\mathbb{R}_{+}$ & Normal \\ 
Gumbel & $\exp \left( -e^{y}\right) $ & $\mathbb{R}$ & $\mu $ & $\mathbb{R}%
_{+}$ & Poisson \\ 
Uniform & $1-y$ & $(0,1)$ & $\mu ^{2}$ & $(1,\infty )$ & Gamma \\ 
Pareto & $y^{-1}$ & $(1,\infty )$ & $-\mu ^{2}$ & $(0,1)$ & --- \\ 
Logistic & $\left( 1+e^{y}\right) ^{-1}$ & $\mathbb{R}$ & $\mu (1-\mu )$ & $%
(0,1)$ & Binomial \\ 
Neg. exponential & $1-e^{y}$ & $\mathbb{R}_{-}$ & $\mu (1+\mu )$ & $\mathbb{R%
}_{+}$ & Neg. binomial \\ 
Cosine & $\cos y$ & $(0,\pi /2)$ & $1+\mu ^{2}$ & $\mathbb{R}_{+}$ & GHS \\ 
\hline
\end{tabular}%
\end{table}

Table \ref{quadratic} shows some examples of hazard location families
corresponding to familiar distributions, all with quadratic slope functions
(polynomials of degree at most two), to be studied in Section \ref{secQHS}.
Except for the Pareto distribution, all the families in the table are IFR.
These six IFR families have the same functional form for $v$ as the variance
functions for Morris' (1982) six natural exponential families. In
particular, the Rayleigh distribution has constant slope function, like the
variance function of the normal distribution.

\subsection{Truncation and censoring\label{secTrans}}

We now study the effect on $v$ of transformations like truncation and
censoring.

Left truncation at some point $c\in \mathcal{C}$ gives rise to a new hazard
location model with $v$ restricted to a subset of $\Psi $. Similarly, the
operation of right censoring at some point $c\in \mathcal{C}$ corresponds to
replacing $Y$ by $Y\wedge c$, also known as the limited loss variable in
loss modelling (Klugman et al., 2004, p.~30). This operation reduces $%
\mathcal{C}$ to the subset $(a,c)$ and introduces the probability $G(c)$ in $%
c$. We summarize these considerations in a lemma.

\begin{lemma}
\label{Prop2}Left truncation at $c\in \mathcal{C}$ corresponds to
restricting the domain of $v$ to the interval between $h(c)$ and $h(b)$.
Right censoring at $c\in \mathcal{C}$ corresponds to restricting the domain
of $v$ to the interval between $h(a)$ and $h(c)$.
\end{lemma}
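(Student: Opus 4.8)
The plan is to track directly what happens to the survival function, the support, the hazard function, and finally the slope function under each of the two operations, reading off the claimed restriction of the domain of $v$ from the definition $v(\mu)=h^{\prime}(h^{-1}(\mu))$ together with $\Psi=h(\mathcal{C})$.

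First I would treat left truncation at $c\in\mathcal{C}=(a,b)$. Conditioning on $Y\geq c$ replaces $G$ by $y\mapsto G(y)/G(c)$ for $y\geq c$, so the new support is $(c,b)$, and the new integrated hazard is $H(y)-H(c)$, whence the new hazard function is the restriction of $h$ to $(c,b)$ (differentiation kills the constant $H(c)$). Therefore the new $\Psi$ is $h((c,b))$, which, since $h$ is strictly monotone on $\mathcal{C}$, is exactly the open interval between $h(c)$ and $h(b)$ (with $h(b)$ interpreted as the appropriate one-sided limit, finite or infinite, allowed by the conventions of Section~\ref{CandD}); on that interval $h^{-1}$ and hence $v=h^{\prime}\circ h^{-1}$ are simply the restrictions of the original ones. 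This gives the first assertion.

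Next I would treat right censoring at $c\in\mathcal{C}$, i.e.\ replacing $Y$ by $Y\wedge c$. Here the survival function becomes $G(y)$ for $y<c$ and jumps to the atom $G(c)$ at $c$; the support of the continuous part is reduced to $(a,c)$, and on this interval $H$, and hence $h$, are unchanged. So the new $\Psi$ is $h((a,c))$, the open interval between $h(a)$ and $h(c)$, and again $v$ is just the restriction of the original slope function to this interval. The only subtlety to flag is that censoring produces a discontinuity at $b=c$ and an improper-looking (atom-bearing) survival function, but this is exactly the situation already permitted in the definitions of $\mathcal{G}$ and $\overline{\mathcal{G}}$ (the discontinuity is at the right endpoint $b$), so the truncated/censored model is again a legitimate member of $\overline{\mathcal{G}}$ and the slope function is well defined on the stated domain.

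The main (and essentially only) obstacle is bookkeeping at the endpoints: one must be careful that "the interval between $h(c)$ and $h(b)$'' is read as open, that $h(b)$ and $h(a)$ are the one-sided limits rather than literal values (which may be $0$, finite, or $+\infty$ depending on IFR/DFR, cf.\ Remark~\ref{Remark1}), and that the monotonicity of $h$ is what guarantees $h$ maps the shrunken support onto precisely that interval with no overlap or gap. Once the endpoint conventions are pinned down, the identity $v=h^{\prime}\circ h^{-1}$ does all the work and nothing further needs to be computed.
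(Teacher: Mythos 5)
Your proposal is correct and follows the same route as the paper, which simply records that truncation at $c$ leaves $h$ unchanged on the reduced support $(c,b)$ while censoring reduces the support to $(a,c)$ with an atom $G(c)$ at $c$, so that $\Psi=h(\mathcal{C})$ shrinks to the corresponding image interval and $v=h'\circ h^{-1}$ restricts accordingly. Your explicit bookkeeping of the survival function, the endpoint limits, and the admissibility of the atom at the new right endpoint just spells out what the paper leaves implicit.
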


In the DFR case, left truncation thus results in the domain $\Psi
=(h(b),h(c))$, whereas right censoring gives the domain $(h(c),h(a))$. The
lemma shows that the restriction of $v$ to a subinterval of its domain is
again the slope function for a hazard location family. When looking for a
model corresponding to a given functional form for $v$, we may hence
concentrate on the largest possible domain consistent with a survival
function in $\overline{\mathcal{G}}_{0}$. Note, however, that restricting
the domain of $v$ to a subset of $\Psi $ implies a change in the
distributional form, because the support is changed. By comparison,
restricting $\mu $ to a subinterval of $\Omega $ in a natural exponential
family selects a subset of the family of distributions, without changing the
distributions as such.

\section{Extreme dispersion models\label{secXD}}

\subsection{Motivation}

We now introduce extreme dispersion models as a parallel to exponential
dispersion models, and show that they satisfy a reproductive property.

Given a natural exponential family (\ref{ne}) with variance function $V(\mu
) $, the corresponding \emph{exponential dispersion model} $\mathrm{ED}(\mu
,\lambda )$ consists of natural exponential families with variance function $%
\lambda ^{-1}V(\mu )$ proportional to $V(\mu )$. The latter is then called
the \emph{unit variance function}. The model $\mathrm{ED}(\mu ,\lambda )$
has density function of the form 
\begin{equation*}
f(y;\theta ,\lambda )=f_{\lambda }(y)\exp \left[ \lambda \left\{ y\theta
-\kappa (\theta )\right\} \right] \text{,}
\end{equation*}%
for a suitable function $f_{\lambda }$. Here $\mu =\tau (\theta )$ is the
mean, in the notation of (\ref{evar}), and $\sigma ^{2}=1/\lambda $ is the 
\emph{dispersion parameter}. The index parameter $\lambda $ has domain $%
\Lambda \subseteq \mathbb{R}_{+}$, which is an additive semigroup (often $%
\mathbb{R}_{+}$ or $\mathbb{N}$).

$\mathrm{ED}(\mu ,\lambda )$ satisfies the following \emph{mean reproductive
property}. The average of $n$ i.i.d. variables $Y_{1},\ldots ,Y_{n}$ from $%
\mathtt{\mathrm{ED}}(\mu ,\lambda )$ has distribution 
\begin{equation}
\bar{Y}_{n}\sim \mathtt{\mathrm{ED}}(\mu ,n\lambda )\text{,}  \label{yabar}
\end{equation}%
where the index parameter is proportional to the sample size. This follows
from the form of the moment generating function of $\mathrm{ED}(\mu ,\lambda
)$, which is%
\begin{equation}
t\mapsto \frac{M^{\lambda }(t/\lambda +\tau ^{-1}(\mu ))}{M^{\lambda }(\tau
^{-1}(\mu ))}\text{,}  \label{MGF}
\end{equation}%
where $M(\cdot )$ is the moment generating function for $f=f_{1}$, cf.~J\o %
rgensen (1997, Ch. 3).

\subsection{Definition}

\begin{definition}
Given a survival function $G\in \overline{\mathcal{G}}$ with hazard function 
$h$ and support $\mathcal{C}$, we define the \emph{extreme dispersion model
generated by }$G$\emph{,} denoted $\mathrm{XD}(\mu ,\lambda )$, as the
family of survival functions in $\overline{\mathcal{G}}_{0}$ given by%
\begin{equation}
y\mapsto G^{\lambda }(y/\lambda +h^{-1}(\mu ))  \label{XDdef}
\end{equation}%
with rate $\mu \in \Psi $, index parameter $\lambda >0$ and support $\lambda
\left( \mathcal{C}-h^{-1}(\mu )\right) $.
\end{definition}

It is straightforward to see that the model $\mathrm{XD}(\mu ,\lambda )$ may
be generated from any of its members in this way, up to a rescaling of $%
\lambda $ to $c\lambda $ for some $c>0$. In the following we work with the
representation (\ref{XDdef}) corresponding to a specific choice for $G$. The
corresponding hazard and density functions are for $y\in \lambda \left( 
\mathcal{C}-h^{-1}(\mu )\right) $ given by%
\begin{equation}
h(y;\mu ,\lambda )=h(y/\lambda +h^{-1}(\mu ))  \label{haz}
\end{equation}%
and 
\begin{equation*}
f(y;\mu ,\lambda )=h(y/\lambda +h^{-1}(\mu ))\exp \left[ -\lambda H\left\{
y/\lambda +h^{-1}(\mu )\right\} \right] \text{,}
\end{equation*}%
respectively. The right extreme of the support is $\lambda (b-h^{-1}(\mu ))$%
, which has probability $G^{\lambda }(b)$.

The parameter $\mu $ is the rate for $\mathrm{XD}(\mu ,\lambda )$ for any
value of $\lambda >0$, which follows from (\ref{haz}) by inserting $y=0$, in
much the same way that $\mu $ is the mean of $\mathrm{ED}(\mu ,\lambda )$
for all $\lambda $. For each fixed value of $\lambda $, (\ref{XDdef})
corresponds to a hazard location model with slope function $\lambda
^{-1}v(\mu )$. Hence $v$ is called the \emph{unit slope function} for $%
\mathtt{\mathrm{XD}}(\mu ,\lambda )$, and by Proposition \ref{hazard} $v$
characterizes $\mathtt{\mathrm{XD}}(\mu ,\lambda )$ up to a rescaling of $%
\lambda $ like above. We call $\sigma ^{2}=1/\lambda $ the \emph{dispersion
parameter}.

The following \emph{min reproductive property} easily follows from the form
of the survival function (\ref{XDdef}). For i.i.d. variables $Y_{1},\ldots
,Y_{n}\sim \mathtt{\mathrm{XD}}(\mu ,\lambda )$ the scaled min $\hat{Y}_{n}$
from (\ref{heware}) has distribution

\ 
\begin{equation}
\hat{Y}_{n}\sim \mathtt{\mathrm{XD}}(\mu ,n\lambda )\text{.}  \label{repro}
\end{equation}%
This is analogous to the mean reproductive property (\ref{yabar}) for
exponential dispersion models. It is equivalent to the max-stable property
of an exponentiated family of distributions (Nelson and Doganaksoy, 1995;
Sarabia and Castillo, 2005; Nadarajah and Kotz, 2006), but the present
formulation emphasizes the fact that the rate is preserved under the scaled
min operation. In (\ref{repro}), like (\ref{yabar}), the index parameter is
proportional to the sample size.

Let us consider the $\mathtt{\mathrm{XD}}(\mu ,\lambda )$ models generated
from first six cases in Table \ref{quadratic}, where in fact the
introduction of the index parameter $\lambda $ corresponds to known
generalizations. In the Rayleigh and Gumbel cases, this adds a scale or
location parameter to the models, respectively. The uniform distribution
becomes a shifted power distribution. The Pareto becomes a shifted
generalized Pareto distribution. The logistic becomes a generalized logistic
distribution. The negative exponential becomes the negative exponentiated
exponential, see Nadarajah and Kotz (2006).

We note in passing the well-known fact that a transformation of the variable 
$Y\sim \mathtt{\mathrm{XD}}(\mu ,\lambda )$ to the cumulated hazard scale $%
H(Y/\lambda +h^{-1}(\mu ))$ gives an exponential variable with parameter $%
\lambda $, possibly right censored at the point $H(b)$.

\subsection{Frailty models}

The study of frailty models reveals a certain intimate connection between
extreme and exponential dispersion models. Let the conditional distribution $%
Y|X=x$ be exponential with parameter $x$, and let $X$ be a non-negative
random variable with moment generating function $M(t)$. Then the marginal
survival function for $Y$ is $M(-y)$ for $y>0$, which is in effect the \emph{%
frailty model} of Vaupel et al. (1979).

In the special case where $X\sim \mathtt{\mathrm{ED}}(\mu ,\lambda )$ with
moment generating function (\ref{MGF}) we obtain the following survival
function for $Y$ (Hougaard, 1986):%
\begin{equation*}
G(y)=\frac{M^{\lambda }(-y/\lambda +\tau ^{-1}(\mu ))}{M^{\lambda }(\tau
^{-1}(\mu ))}\text{.}
\end{equation*}%
This is an extreme dispersion model with hazard function $h(y)=\tau (-y)$,
and corresponding unit slope function $v(\mu )=-V(\mu )$, the negative of
the unit variance function for $Y$. This model is hence DFR. An example is
the generalized Pareto distribution, which is the frailty model
corresponding to a gamma frailty, with slope function $-\mu ^{2}$ for $\mu
\in (0,1)$. As this example illustrates, the domain for $v$ may be a proper
subset of that for $V$.

In the particular case where $X$ has a positive probability at $0$, the
distribution of $Y$ becomes improper with $P(Y=\infty )=P(X=0)>0$. When $X$
follows the Tweedie compound Poisson distribution with $1<p<2$ (cf. J\o %
rgensen, 1997, Ch. 4), which has a positive probability at $0$, an improper
distribution for $Y$ is obtained, as pointed out by Aalen (1988).

\subsection{Exponential convergence}

We shall now return to the exponential convergence of Section \ref{secExpo}.
By way of motivation, note that an exponential dispersion variable $Y\sim 
\mathtt{\mathrm{ED}}(\mu ,\lambda )$ convergences in probability to $\mu $
as $\lambda \rightarrow \infty $, as is clear from (\ref{yabar}). The
analogous result for extreme dispersion models involves convergence to the
exponential distribution.

\begin{proposition}
\label{xdtoexe}For $Y\sim \mathtt{\mathrm{XD}}(\mu ,\lambda )$ and $c\in 
\mathbb{R}$ the conditional distribution of $Y-c$ given $Y>c$ is
asymptotically exponential with rate $\mu $ for $\lambda \rightarrow \infty $%
.
\end{proposition}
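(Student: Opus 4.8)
The plan is to write the conditional survival function explicitly from the defining representation (\ref{XDdef}) and to reduce everything to a second-order Taylor expansion of the integrated hazard $H$. Set $\theta_{0}=h^{-1}(\mu)$; since $\mu\in\Psi=h(\mathcal{C})$, the point $\theta_{0}$ lies in the interior of $\mathcal{C}=(a,b)$. For all $\lambda$ large enough the arguments $c/\lambda+\theta_{0}$ and $(c+y)/\lambda+\theta_{0}$ (for any fixed $y>0$) belong to $\mathcal{C}$, so $P(Y>c)>0$ and the conditioning is meaningful; moreover $Y$ has a density on the interior of its support, so $P(Y>c)=P(Y\ge c)$ and the possible atom $G^{\lambda}(b)$ at the right endpoint plays no role. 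Using $\{Y\ge c+y\}\subseteq\{Y>c\}$ together with (\ref{XDdef}), one obtains, for each fixed $y>0$ and all large $\lambda$,
\begin{equation*}
R_{\lambda}(y):=P(Y-c\ge y\mid Y>c)=\frac{G^{\lambda}\!\left((c+y)/\lambda+\theta_{0}\right)}{G^{\lambda}\!\left(c/\lambda+\theta_{0}\right)}=\exp\!\left[-\lambda\left\{H\!\left(\tfrac{c+y}{\lambda}+\theta_{0}\right)-H\!\left(\tfrac{c}{\lambda}+\theta_{0}\right)\right\}\right].
\end{equation*}

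The next step is the expansion. Because $G\in\overline{\mathcal{G}}$, $H$ is twice continuously differentiable near $\theta_{0}$, with $H'=h$, $h(\theta_{0})=\mu$ and $h'(\theta_{0})=v(\mu)$ by (\ref{three}); hence $H(\theta_{0}+t)=H(\theta_{0})+\mu t+\tfrac12 v(\mu)t^{2}+o(t^{2})$ as $t\to 0$. Inserting $t=(c+y)/\lambda$ and $t=c/\lambda$, subtracting, and multiplying by $\lambda$, the constant terms cancel, the linear terms leave $\mu y$, and the quadratic terms together with the remainder contribute only $O(\lambda^{-1})$. Therefore
\begin{equation*}
\lambda\left\{H\!\left(\tfrac{c+y}{\lambda}+\theta_{0}\right)-H\!\left(\tfrac{c}{\lambda}+\theta_{0}\right)\right\}=\mu y+O(\lambda^{-1})\longrightarrow \mu y\qquad(\lambda\to\infty),
\end{equation*}
so $R_{\lambda}(y)\to e^{-\mu y}$ for every $y>0$. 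Since $e^{-\mu y}$ is the (continuous) survival function of the exponential distribution with rate $\mu$, this pointwise convergence of survival functions is exactly convergence in distribution, which is the assertion. This is the shifted, continuous-$\lambda$ version of the expansion displayed in (\ref{conditional}).

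There is no deep obstacle here; the work is entirely bookkeeping. The points needing attention are: (i) verifying $\theta_{0}\in(a,b)$ and that both arguments of $H$ stay in $\mathcal{C}$ for large $\lambda$, which legitimises the $C^{2}$ expansion and is where the standing hypothesis $G\in\overline{\mathcal{G}}$ enters; and (ii) noting that the conclusion does not depend on whether $G$ is proper or right censored at $b$, nor on the $>$ versus $\ge$ convention, since $\theta_{0}<b$ forces $(c+y)/\lambda+\theta_{0}<b$ eventually and $Y$ is absolutely continuous in the interior of its support. If one prefers, the remainder estimate can be made uniform on compact $y$-sets by writing it in integral form and bounding $H''$ on a fixed closed neighbourhood of $\theta_{0}$, but pointwise-in-$y$ convergence already delivers the stated weak limit.
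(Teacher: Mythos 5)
Your proof is correct and follows essentially the same route as the paper: both write the conditional survival function as the ratio $G^{\lambda}((c+y)/\lambda+h^{-1}(\mu))/G^{\lambda}(c/\lambda+h^{-1}(\mu))$ and perform a second-order Taylor expansion of $H$, the only (immaterial) difference being that you expand about $h^{-1}(\mu)$ with a little-$o$ remainder while the paper expands about $c/\lambda+h^{-1}(\mu)$ with a Lagrange-form remainder and then invokes continuity of $h$ and $h'$.
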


\begin{proof}
Let $\lambda $ be large enough to make the support $\lambda \left( \mathcal{C%
}-h^{-1}(\mu )\right) $ contain $c$. Then the conditional survival function
of $Y-c$ given $Y>c$ is, for $y>0$, 
\begin{equation*}
G_{c}(y;\mu ,\lambda )=\frac{G^{\lambda }(\left( c+y\right) /\lambda
+h^{-1}(\mu ))}{G^{\lambda }(c/\lambda +h^{-1}(\mu ))}\text{.}
\end{equation*}%
A Taylor expansion of $H$ around $c/\lambda +h^{-1}(\mu )$ gives 
\begin{equation*}
G_{c}(y;\mu ,\lambda )=\exp \left[ -yh\left\{ c/\lambda +h^{-1}(\mu
)\right\} -\frac{y^{2}}{2\lambda }h^{\prime }\left\{ c_{\lambda
\,y}+h^{-1}(\mu )\right\} \right] \text{,}
\end{equation*}%
where $c_{\lambda \,y}$ is between $c/\lambda $ and $\left( c+y\right)
/\lambda $. Letting $\lambda \rightarrow \infty $ and using the continuity
of $h$ and $h^{\prime }$, we obtain the desired result.
\end{proof}

\section{Quadratic slope functions\label{secQHS}}

We now follow Morris' (1982) footsteps and classify the set of quadratic
slope functions. To this end, we need to study reflections of slope
functions, and the role of exponential components. These transformations
have a somewhat formal nature, but turn out to be useful for the
classification result. For the sake of brevity, certain details in this
section are left to the reader.

\subsection{Reflections}

We now consider what happens when we subject $v$ to a horizontal or vertical
reflection.

\begin{proposition}
\label{reflection}Let $G\in \overline{\mathcal{G}}_{0}$ have support $%
\mathcal{C}=(a,b)$ and slope function $v$. \emph{Horizontal reflection: }If $%
G$ is right censored, then the survival function $y\mapsto G(b)/G(-y)$ with
support $-\mathcal{C}$ has hazard function $h(-y)$ and slope function $-v$
on $\Psi $, and is also right censored. \emph{Vertical reflection:} Assume
that $(0,m)\subseteq \Psi $ and restrict the support to the interval $%
(a_{0},b_{0})$, either $(a,h^{-1}(m))$ (IFR case) or $(h^{-1}(m),b)$ (DFR
case). If $G$ is right censored at $b_{0}<\infty $ then the survival
function $y\mapsto G(-y)/G(b_{0})\exp \left\{ -m(y+b_{0})\right\} $ with
support $(-b_{0},-a_{0})$ has slope function $\mu \mapsto v(m-\mu )$ with
domain $(0,m)$, and is right censored if $a_{0}>-\infty $.
\end{proposition}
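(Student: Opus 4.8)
The plan is to prove both parts by direct computation with the integrated hazard function, using the identities $H=-\log G$, $h=H'$ and $v(\mu)=h'(h^{-1}(\mu))$, together with the convention that a survival function is prescribed by its formula on its support and extended by $1$ to the left and by its censoring value to the right. Two elementary facts are used repeatedly: since $a$ is the left endpoint of $\mathcal{C}$ and $G$ is continuous there, $G(a)=1$, so that ``right censored'' simply means a positive value of the survival function at the right end of the support; and $h>0$ on $\mathcal{C}$, because a strictly monotone nonnegative function has no interior zero.

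\emph{Horizontal reflection.} I would write $\tilde{G}(y)=G(b)/G(-y)$ on $-\mathcal{C}=(-b,-a)$ and compute $\tilde{H}(y):=-\log\tilde{G}(y)=H(b)-H(-y)$, which is finite since $G(b)>0$; the chain rule then gives $\tilde{h}(y)=\tilde{H}'(y)=h(-y)$ and $\tilde{h}'(y)=-h'(-y)$. As $y$ runs over $-\mathcal{C}$ the value $\tilde{h}(y)=h(-y)$ runs over $h(\mathcal{C})=\Psi$, so the reflected slope function has domain $\Psi$; moreover $\tilde{h}$ is decreasing exactly when $h$ is increasing, so IFR and DFR are interchanged, in agreement with the change of sign. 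From $\tilde{h}^{-1}(\mu)=-h^{-1}(\mu)$ I then get $\tilde{v}(\mu)=\tilde{h}'(-h^{-1}(\mu))=-h'(h^{-1}(\mu))=-v(\mu)$, and finally $\tilde{G}(-a)=G(b)/G(a)=G(b)>0$ shows the censoring mass survives at the right end.

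\emph{Vertical reflection.} Here I would set $\hat{G}(y)=\{G(-y)/G(b_0)\}\exp\{-m(y+b_0)\}$ on $(-b_0,-a_0)$; then $\hat{G}(-b_0)=1$, and since $G$ is by hypothesis right censored at $b_0<\infty$ we have $G(b_0)>0$, so $\hat{H}(y):=-\log\hat{G}(y)=H(-y)-H(b_0)+m(y+b_0)$ is well defined and yields $\hat{h}(y)=m-h(-y)$, $\hat{h}'(y)=h'(-y)$. The restriction to $(a_0,b_0)$ is precisely what makes $\hat{h}$ legitimate: since $h>0$ on $\mathcal{C}$ and $(0,m)\subseteq\Psi$, the infimum of $\Psi$ is $0$, so $h$ tends to $0$ at the end of $\mathcal{C}$ opposite to $h^{-1}(m)$; hence $h\in(0,m)$ throughout $(a_0,b_0)$, and therefore $\hat{h}(y)=m-h(-y)$ is a genuine hazard function whose range is exactly $(0,m)$ and whose monotone type is the same as that of $h$ (so $\hat{v}$ keeps the sign of $v$). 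Inverting, $\hat{h}^{-1}(\mu)=-h^{-1}(m-\mu)$, so $\hat{v}(\mu)=\hat{h}'(-h^{-1}(m-\mu))=h'(h^{-1}(m-\mu))=v(m-\mu)$ on $(0,m)$. For the censoring claim: if $a_0>-\infty$ then $\hat{G}(-a_0)=\{G(a_0)/G(b_0)\}\exp\{-m(b_0-a_0)\}>0$, a positive mass at the finite right end; while if $a_0=-\infty$ the factor $\exp\{-m(y+b_0)\}$ with $m>0$ forces $\hat{G}\to 0$, so the distribution is then proper.

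The individual steps are routine chain-rule computations; the part that I expect to require genuine care — and to be the main obstacle — is the bookkeeping of supports and of where the censoring mass sits after each reflection: confirming that horizontal reflection transfers the mass to $-a$ while flipping IFR and DFR, and that in the vertical case the restriction to $(a_0,b_0)$ is exactly what pins $m-h(-y)$ strictly between $0$ and $m$, so that the reflected slope function has the stated domain $(0,m)$ and the stated form $v(m-\cdot)$.
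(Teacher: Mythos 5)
Your proof is correct and follows essentially the same route as the paper's: direct computation of the transformed integrated hazard ($H(b)-H(-y)$, resp.\ $H(-y)-H(b_0)+m(y+b_0)$), the chain rule to get $h(-y)$ and $m-h(-y)$, inversion to obtain $-v(\mu)$ and $v(m-\mu)$, and evaluation at the right endpoint for the censoring claims. The paper's version simply labels these steps as "easily seen"; your write-up supplies the bookkeeping (range of $\hat h$, preservation/reversal of monotone type) that the paper omits.
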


\begin{proof}
Horizontal reflection: The survival function $G(b)/G(-y)$ with support $-%
\mathcal{C}$ is easily seen to have hazard function $h(-y)$ and slope
function $-v(\mu )$ on $\Psi $. The value at the right endpoint is $%
G(b)/G(a)>0$, so the model is right censored. Vertical reflection: The
survival function $G(-y)/G(b_{0})\exp \left\{ -m(y+b_{0})\right\} $ with
support $(-b_{0},-a_{0})$ is similarly seen to have hazard function $m-h(-y)$
and slope function $v(m-\mu )$ on $(0,m)$. If $a_{0}>-\infty $ then $%
G(a_{0})/G(b_{0})\exp \left\{ -m(b_{0}-a_{0})\right\} >0$, so in this case
the model is right censored.
\end{proof}

Table \ref{reflections} shows three hazard location families with quadratic
slope functions obtained by vertical reflection of families from Table \ref%
{quadratic}. 
\begin{table}[tb]
\caption{Some vertically reflected quadratic hazard slopes. }
\label{reflections}\centering%
\begin{tabular}{|c|c|c|c|c|}
\hline
HL$(\mu )$ & $G(y)$ & $\mathcal{C}$ & $v(\mu )$ & $\Psi $ \\ \hline
Reflected Gumbel & $\exp \left( 1-y-e^{-y}\right) $ & $\mathbb{R}_{+}$ & $%
1-\mu $ & $(0,1)$ \\ 
Reflected Logistic & $1/\cosh 
{\frac12}%
y$ & $\mathbb{R}_{+}$ & $\left( 
{\frac12}%
-\mu \right) (%
{\frac12}%
+\mu )$ & $(0,%
{\frac12}%
)$ \\ 
Reflected neg. exponential & $4\left( e^{-y}-e^{-2y}\right) $ & $(\log
2,\infty )$ & $\left( \mu -1\right) (\mu -2)$ & $(0,1)$ \\ \hline
\end{tabular}%
\end{table}

\subsection{Exponential components\label{secExco}}

Extending the results of Section \ref{secExpo}, we now show that an
exponential component (in the sense of Remark \ref{minexp} below)
corresponds to a location change for the slope function.

\begin{proposition}
\label{excompo}Let $G\in \overline{\mathcal{G}}_{0}$ with support $\mathcal{C%
}=(a,b)$ have slope function $v$ with domain $\Psi =(\underline{\eta },%
\overline{\eta })$. If $a>-\infty $ then for $m\geq -\underline{\eta }$, the
function $v(\mu -m)$ with domain $m+\Psi $ is the slope function of the
survival function given by $G(y)\exp \left\{ -m(y-a)\right\} $ on $\mathcal{C%
}$.
\end{proposition}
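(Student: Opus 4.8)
The plan is to write down the candidate survival function explicitly, obtain its hazard and slope by a one-line computation, and then devote the real work to verifying that the candidate lies in $\overline{\mathcal{G}}_{0}$; this membership check is precisely where the hypotheses $a>-\infty$ and $m\geq -\underline{\eta}$ enter.

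First I would set $\widetilde{G}(y)=G(y)\exp\{-m(y-a)\}$ for $y\in\mathcal{C}$. Because $a>-\infty$ the anchoring at $a$ is meaningful and gives $\widetilde{G}(a)=G(a)=1$, so $\widetilde{G}$ is continuous at $a$ with the correct normalization; it also has the same support $\mathcal{C}$ as $G$, so $0\in\mathcal{C}$, and it is twice continuously differentiable on $\mathcal{C}$ as a product of such functions. Taking $-\log$ gives $\widetilde{H}(y)=H(y)+m(y-a)$, hence $\widetilde{h}(y)=\widetilde{H}'(y)=h(y)+m$: the new hazard is the old one shifted by the constant $m$. Consequently $\widetilde{h}$ inherits the strict monotonicity of $h$ on $\mathcal{C}$ (IFR stays IFR, DFR stays DFR), and its range is $\widetilde{h}(\mathcal{C})=h(\mathcal{C})+m=m+\Psi$.

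The crux is positivity of the density $\widetilde{f}=-\widetilde{G}'$. Differentiating gives $\widetilde{f}(y)=\widetilde{h}(y)\widetilde{G}(y)=\{h(y)+m\}\,G(y)\,e^{-m(y-a)}$, which is strictly positive on $\mathcal{C}$ exactly when $h(y)+m>0$ there. Since $h(\mathcal{C})=(\underline{\eta},\overline{\eta})$ we have $h(y)>\underline{\eta}$, so $h(y)+m>\underline{\eta}+m\geq 0$ whenever $m\geq -\underline{\eta}$, which is the hypothesis --- this is the step I expect to be the main obstacle, everything else being bookkeeping. Positivity of $\widetilde{f}$ also makes $\widetilde{G}$ strictly decreasing with $\widetilde{G}\leq\widetilde{G}(a)=1$ on $\mathcal{C}$, so $\widetilde{G}$ is a bona fide survival function. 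For the right endpoint: if $b<\infty$ and $G$ is right censored then $\widetilde{G}(b)=G(b)e^{-m(b-a)}>0$, so $\widetilde{G}$ is again right censored; if $b=\infty$, which by Remark \ref{Remark1} can only occur in the IFR case, then $\widetilde{h}$ increasing and positive forces $\widetilde{H}(y)\to\infty$ and hence $\widetilde{G}\to 0$, so $\widetilde{G}$ is proper. Thus $\widetilde{G}\in\overline{\mathcal{G}}_{0}$.

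It then only remains to read off the slope function via (\ref{three}). From $\widetilde{h}=h+m$ we get $\widetilde{h}^{-1}(\mu)=h^{-1}(\mu-m)$ for $\mu\in m+\Psi$ and $\widetilde{h}'=h'$, so $\widetilde{v}(\mu)=\widetilde{h}'(\widetilde{h}^{-1}(\mu))=h'(h^{-1}(\mu-m))=v(\mu-m)$, with domain $m+\Psi$. Since by the remark following Definition \ref{defHL} the slope function does not depend on the representing survival function, this is the slope function of the hazard location family generated by $\widetilde{G}$, as claimed.
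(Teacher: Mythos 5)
Your proof is correct and follows the same route as the paper's, which simply records that the integrated hazard becomes $H(y)+m(y-a)$, hence the hazard $h(y)+m$ and the slope function $v(\mu-m)$ on $m+\Psi$; your additional verification that the hypotheses $a>-\infty$ and $m\geq-\underline{\eta}$ guarantee membership in $\overline{\mathcal{G}}_{0}$ is exactly the content the paper compresses into the word ``provided''. One small slip: Remark \ref{Remark1} does not say that $b=\infty$ forces the IFR case (the Pareto in Table \ref{quadratic} is DFR with $b=\infty$), but this is harmless because the framework explicitly admits improper and right-censored survival functions, so the properness check at the right endpoint is not needed for the claim.
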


\begin{proof}
The survival function $G(y)\exp \left\{ -m(y-a)\right\} $ has integrated
hazard function 
\begin{equation}
m\left( y-a\right) +H(y)  \label{explus}
\end{equation}%
on $\mathcal{C}$, provided $m\geq -\underline{\eta }$, with hazard function $%
m+h(y)$ and slope function $v(\mu -m)$ on $m+\Psi $.
\end{proof}

\begin{remark}
\label{minexp}A positive $m$ in (\ref{explus}) corresponds to the variable $%
\min \left\{ Y,a+E_{m}\right\} $, where the exponential variable $E_{m}$ is
independent of $Y$. In this case we say that we are \emph{introducing} an
exponential component. Conversely, when $m=-\underline{\eta }<0$ we say that
we are \emph{removing} the exponential component, making $\inf \Psi =0$. The
only model in Table \ref{quadratic} with an exponential component is the
uniform distribution. After removing the exponential component, we obtain 
\begin{equation}
G(y)=e^{y}(1-y)\text{ for }y\in (0,1)\text{.}  \label{expaway}
\end{equation}%
The corresponding slope function is $\left( 1+\mu \right) ^{2}$ with domain $%
\Psi =\mathbb{R}_{+}$.
\end{remark}

Note that, using the terminology of Remark \ref{minexp}, it is understood in
connection with the classification results below that vertical reflection
(Proposition \ref{reflection}) is applied only after removing the
exponential component, if necessary, to ensure that $\inf \Psi =0$.

An example of an exponential component is encountered in connection with the
Gumbel family with unit slope function $v(\mu )=\mu $ on $\Psi =\mathbb{R}%
_{+}$. The Gompertz-Makeham distribution is obtained from the Gumbel by left
truncation at $0$, restricting $v$ to $\mu >1$, and then adding an
exponential component. This gives the hazard function $h(y)=m+e^{\beta y}$
for $m,\beta ,y>0$ and unit slope function $v(\mu )=\mu -m$ for $\mu >1+m$.
A horizontal reflection, corresponding to $\beta <0$, yields $v(\mu )=m-\mu $
for $\mu \in (m,m+1)$.

\subsection{Classification}

We have now considered several types of transformations of slope functions,
including censoring, truncation and reflections. In addition to these, we
consider the following three transformations of a given hazard location
family $\mathrm{HL}(\mu )$ with slope function $v$ and domain $\Psi $.

\begin{enumerate}
\item Location change: removing or adding an exponential component maps $v$
into $v(\mu -m)$.

\item Scale transformation: a scale transformation of $Y$ maps $v$ into $%
c^{-2}v(c\mu )$ for $c>0$.

\item Multiplication: generating an extreme dispersion model maps $v$ into $%
v/\lambda $ for $\lambda >0$.
\end{enumerate}

A combination of these three operations maps $v$ into%
\begin{equation}
\gamma v\left( \left( \mu -\alpha \right) /\beta \right) \text{,}
\label{scaling}
\end{equation}%
where $\gamma ,\beta >0$ and $\mu \in \alpha +\beta \Psi $. We refer to (\ref%
{scaling}) as the operation of \emph{location and scaling}. This leads us to
the main classification theorem.

\begin{theorem}
\label{qsf}Up to left truncation, right censoring, reflection, location and
scaling, the only hazard location models with quadratic slope functions are
those shown in Table \ref{quadratic}.
\end{theorem}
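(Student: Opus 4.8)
The plan is to transpose Morris's (1982) argument for quadratic variance functions to slope functions. Write a quadratic slope function as $v(\mu)=\gamma_{2}\mu^{2}+\gamma_{1}\mu+\gamma_{0}$ on its domain $\Psi$, and note first that the location-and-scaling operation (\ref{scaling}) acts on the three-parameter family of such quadratics: $\gamma\,v((\mu-\alpha)/\beta)$ is again quadratic, with leading coefficient $\gamma\gamma_{2}/\beta^{2}$. Using this, together with the reflections of Proposition \ref{reflection} applied to truncated or censored versions, one can normalise $\gamma_{2}$ to one of $-1,0,1$ and either complete the square or translate a real zero of $v$ to the origin. The data that survive this reduction are the sign of $\gamma_{2}$, the sign of the discriminant $\gamma_{1}^{2}-4\gamma_{0}\gamma_{2}$, and which constant-sign interval of $v$ is taken as $\Psi$ — positive in the IFR case, negative in the DFR case, which we treat in parallel.

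First I would dispose of the linear and constant cases: $\gamma_{2}=\gamma_{1}=0$ gives, after scaling, $v\equiv1$ (Rayleigh); $\gamma_{2}=0$, $\gamma_{1}\neq0$ gives, after a location change and scaling, $v(\mu)=\mu$ on the maximal domain $\mathbb{R}_{+}$ (Gumbel). When $\gamma_{2}\neq0$: a vanishing discriminant gives $v(\mu)=\pm(\mu-\mu_{0})^{2}$, reducing to $v(\mu)=\mu^{2}$ (uniform) or $v(\mu)=-\mu^{2}$ (Pareto); a negative discriminant gives a nowhere-zero $v$, reducing after completing the square to $v(\mu)=1+\mu^{2}$ (cosine); a positive discriminant gives two distinct real zeros, and $\Psi$ must then lie either strictly between the zeros — reducing, after moving them to $0$ and $1$, to $v(\mu)=\mu(1-\mu)$ on $(0,1)$ (logistic) — or strictly outside them — reducing, after moving them to $0$ and $-1$, to $v(\mu)=\mu(1+\mu)$ on $\mathbb{R}_{+}$ (negative exponential). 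In each branch the maximal interval $\Psi$ carrying a member of $\overline{\mathcal{G}}_{0}$, recovered if necessary by left truncation in the sense of Lemma \ref{Prop2}, is the one in Table \ref{quadratic}.

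It then remains to check that each canonical form is actually realised, and by the tabulated survival function. For this I would solve the defining differential equation (\ref{hv}), $dh^{-1}(\mu)/d\mu=1/v(\mu)$, by partial fractions: $1/v$ integrates to a logarithm, a reciprocal, or an arctangent according to the sign of the discriminant, so $h^{-1}$ — and hence, by inversion, $h$ — is explicit; integrating $h$ via (\ref{H(y)}) and exponentiating gives $G=e^{-H}$, which one verifies lies in $\overline{\mathcal{G}}$ by checking $f=-G'>0$ and strict monotonicity of $h$ on $\mathcal{C}$. This reproduces exactly the seven generators in the tables, so by Proposition \ref{hazard}, which says the slope function pins down the location family, the list is complete.

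The main obstacle I anticipate is the boundary and domain bookkeeping rather than any single computation. One must decide, in the positive-discriminant case, which configuration of the two real zeros relative to $\Psi$ carries a genuine survival function, and check the endpoint behaviour of $\mathcal{C}$: properness, right censoring, and — in the DFR case — integrability of $h$ at the finite left endpoint forced by Remark \ref{Remark1}. This last point is in fact why the only surviving DFR quadratic is the Pareto one $-\mu^{2}$: forms such as $-1$, $-\mu$, or $-(1+\mu^{2})$ would require $h$ to be non-integrable at $a$ or force $a=-\infty$, yielding a degenerate or disallowed $G$. One must also confirm that the reductions above stay inside the admissible class — horizontal reflection needs right censoring, and the exponential-component removal used to move a zero to the origin needs $a>-\infty$ with $\inf\Psi=0$ — so that the required operation is available on each branch; this case-by-case verification is the delicate part, and is what the authors indicate they leave largely to the reader.
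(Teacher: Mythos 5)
Your overall strategy coincides with the paper's: normalise the quadratic by the location-and-scaling operation (\ref{scaling}) so that the classification reduces to canonical forms indexed by the degree and the sign of the discriminant (the paper phrases this as ``roots either $\pm 1$, $0$ or $i$''), realise each canonical form by integrating $dh^{-1}/d\mu =1/v(\mu )$ and exponentiating, and use the integrability criterion of Remark \ref{remSubstitute} to determine the admissible domains --- in particular to see that $\pm \mu ^{2}$ cannot live on all of $\mathbb{R}_{+}$. Your discriminant bookkeeping is in fact more explicit than the paper's one-line reduction, and your identification of the endpoint/domain analysis as the delicate part is exactly right.

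There is, however, one genuine error in your handling of the DFR branch. You assert that the only surviving DFR quadratic is $-\mu ^{2}$, and that forms such as $-1$, $-\mu $ or $-(1+\mu ^{2})$ are ``degenerate or disallowed'' because they would force $a=-\infty $ or non-integrability of $h$ at $a$. That argument is valid only for the \emph{maximal} domain: on a bounded domain $(0,M)$ each of these is a perfectly good slope function. For instance, right-censoring the Rayleigh at $c$ and applying the horizontal reflection of Proposition \ref{reflection} produces the survival function $y\mapsto \exp \{(y^{2}-c^{2})/2\}$ on $(-c,0)$, which is DFR with slope function $-1$ on $(0,c)$; similarly $G(y)=\sin a/\sin y$ on $(a,\pi /2)$ has slope $-(1+\mu ^{2})$ on $(0,\cot a)$, and the reflected right-censored Gumbel has slope $-\mu $. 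These models are not left truncations of any maximal-domain model (there is none), so your reduction scheme of ``recover the maximal $\Psi $ and truncate'' never reaches them, and your argument would wrongly conclude they do not exist. The correct disposition --- and the one the paper takes --- is that every such negative quadratic is necessarily right censored and is therefore the horizontal reflection of a (right-censored) IFR entry of Table \ref{quadratic}, hence covered by the ``up to reflection'' clause; the Pareto alone must be listed separately because it is DFR yet \emph{not} right censored, so it cannot arise as a reflection. The same mechanism generates the additional right-censored cases of the form $\pm \mu ^{2}$ on subintervals complementary to the uniform and Pareto domains, and the four $(\mu -m)^{2}$ cases from vertical reflection, which your sketch silently folds into ``reducing to $\mu ^{2}$''. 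The final list is unaffected, but as written the DFR case analysis rests on a false claim and needs to be replaced by the reflection argument.
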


The next remark will useful for the proof.

\begin{remark}
\label{remSubstitute}Consider $G\in \overline{\mathcal{G}}$ and $c,d\in 
\mathcal{C}$. The following identity 
\begin{equation}
H(d)-H(c)=\int_{h(c)}^{h(d)}\frac{\mu }{v(\mu )}\,d\mu  \label{substitute}
\end{equation}%
follows by the substitution $\mu =h(x)$ in the integral (\ref{H(y)}). It is
useful for checking if a given function $v$ may serve as a slope function.
By taking $c=a$, we find that the continuity of $G$ at $a$ is equivalent to
the integral (\ref{substitute}) being convergent at $h(a)$. By taking $d=b$,
we find that right censoring is equivalent to the integral (\ref{substitute}%
) being convergent at $h(b)$.
\end{remark}

\begin{proof}
[of Theorem \ref{qsf}] By means of the location and scaling operation we may
reduce the classification problem to quadratic slope functions with simple
forms like in Table \ref{quadratic}, having roots either $\pm 1$, $0$ or $i$%
. A combination of vertical and horizontal reflections applied to the seven
cases of Table \ref{quadratic} then covers all possible shapes of quadratic
slope functions, most of which are right censored (cf. Proposition \ref%
{reflection}). Regarding the uniform and Pareto distributions, an
application of Remark \ref{remSubstitute} shows that neither $\mu ^{2}$ nor $%
-\mu ^{2}$ can be slope functions on $\Psi =\mathbb{R}_{+}$, but only on a
subset of $\mathbb{R}_{+}$. It follows that horizontal reflections of the
uniform and Pareto distributions give rise to two separate cases of right
censored slope functions of the form $\pm \mu ^{2}$, and a further four
cases of the form $(\mu -m)^{2}$ that follow by vertical reflection. It is
easily seen that this covers all possible cases.
\end{proof}

\begin{remark}
\label{remCensor}There are three cases with $\inf \mathcal{C}=-\infty $ in
Table \ref{quadratic} where a vertical reflection leads to models that are
not right censored, of which typical examples are shown in Table \ref%
{reflections}.
\end{remark}

One could also explore parallels of other classification results for NEFs,
such as Letac and Mora's (1990) cubic variance functions, but this is
outside the scope of the present paper.

\section{Generalized extreme value distributions\label{secGEV}}

We now investigate the analogy between the generalized extreme value
distribution and the Tweedie class of exponential dispersion models. The
latter is characterized by having unit variance functions of power form $%
V(\mu )=\mu ^{p}$, cf.~J\o rgensen (1997, Ch. 4) and references therein.
Here $p=0$ corresponds to the normal distribution with domain $\mathbb{R}$,
whereas the remaining cases, namely $p<0$ and $p\geq 1$, all have domain $%
\mathbb{R}_{+}$. The special cases $p=0,1,2$ all appear in Table~\ref%
{quadratic}, and a further simple case is $p=3$, corresponding to the
inverse Gaussian distribution.

\subsection{Definition}

The standard generalized extreme value distribution for minima is defined by 
\begin{equation*}
G(y)=\exp \left\{ -\left( 1-\gamma y\right) ^{-1/\gamma }\right\} \text{,}
\end{equation*}%
with support defined by $\gamma y<1$. Here $\gamma \in \mathbb{R}$, and the
value $\gamma =0$ (defined by continuity) corresponds the Gumbel
distribution. All extreme value distributions except the exponential ($%
\gamma =-1$) have monotone hazard rates and their slope functions are of
power form 
\begin{equation}
v(\mu )=\frac{1}{2-p}\mu ^{p}  \label{power}
\end{equation}%
for $\mu >0$, where the parameter $p\in \mathbb{R\setminus }\left\{
2\right\} $ is defined by 
\begin{equation}
p=p(\gamma )=\frac{1+2\gamma }{1+\gamma }\text{.}  \label{p(ga)}
\end{equation}%
The models are IFR for $p<2$ ($\gamma >-1$) and DFR for $p>2$ ($\gamma <-1$%
). As we saw in the proof of Theorem \ref{qsf} there is no slope function on 
$\mathbb{R}_{+}$ proportional to $\mu ^{2}$. Table \ref{extreme} summarizes
the main cases of generalized extreme value distributions corresponding to
different values of $p$. 
\begin{table}[tbp]
\caption{Summary of generalized extreme value distributions.}
\label{extreme}\centering%
\begin{tabular}{|c|c|c|c|}
\hline
EV$_{\gamma }(\mu ,\lambda )$ & $\gamma $ & $p$ & Support \\ \hline
Weibull & $\gamma <-1$ & $p>2$ & $\left( 1/\gamma ,\infty \right) $ \\ 
Exponential & $\gamma =-1$ & $-$ & $\left( -1,\infty \right) $ \\ 
Weibull & $-1<\gamma <0$ & $p<1$ & $\left( 1/\gamma ,\infty \right) $ \\ 
Gumbel & $\gamma =0$ & $p=1$ & $\mathbb{R}$ \\ 
Fr\'{e}chet & $\gamma >0$ & $1<p<2$ & $\left( -\infty ,1/\gamma \right) $ \\ 
\hline
\end{tabular}%
\end{table}

Introducing location and index parameters, the generalized extreme value
distributions are seen to be examples of extreme dispersion models, one for
each $\gamma $. We thus define the $\mathrm{EV}_{\gamma }(\mu ,\lambda )$ to
be the extreme dispersion model given by the survival function%
\begin{equation*}
y\mapsto \exp \left\{ -\lambda \left( \mu ^{-\gamma /(1+\gamma )}-\gamma
y/\lambda \right) ^{-1/\gamma }\right\} \text{,}
\end{equation*}%
with support defined by $\gamma y<\lambda \mu ^{-\gamma /(1+\gamma )}$,
which is a reparametrization of the usual extreme value distribution. The $%
\mathrm{EV}_{\gamma }(\mu ,\lambda )$ model satisfies the following scaling
property 
\begin{equation}
c\mathrm{EV}_{\gamma }(\mu ,\lambda )=\mathrm{EV}_{\gamma }(c^{-1}\mu
,c^{2-p}\lambda )\text{.}  \label{cEV}
\end{equation}

\subsection{Characterization}

The Tweedie models may be characterized as the only exponential dispersion
models closed under scale transformations, cf.~J\o rgensen (1997, p.~128).
We now show, by means of Proposition \ref{hazard} that the extreme value
distributions satisfy a similar property.

\begin{theorem}
\label{XDscale}Let \textrm{$XD$}$(\mu ,\lambda )$ be such that for some $%
\lambda >0$ and all $\mu ,c>0$%
\begin{equation}
c\mathrm{XD}(\mu ,\lambda )=\mathrm{XD}(c^{-1}\mu ,g_{\lambda }(c))
\label{scalexd}
\end{equation}%
for some positive function $g_{\lambda }(c)$. Then \textrm{$XD$}$(\mu
,\lambda )$ is a generalized extreme value distribution.
\end{theorem}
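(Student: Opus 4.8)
The plan is to exploit Proposition \ref{hazard}: a hazard location family is determined by its slope function, so the hypothesis (\ref{scalexd}) translates into a functional equation for the unit slope function $v$, which I expect to force $v$ to be of power form.

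First I would identify both sides of (\ref{scalexd}) as hazard location families (with $c$ and $\lambda$ held fixed and the rate varying) and write down their slope functions. For fixed $\lambda$, $\mathrm{XD}(\mu,\lambda)$ is the hazard location family with slope function $\lambda^{-1}v$; applying a scale change by $c$ turns this into the hazard location family whose slope function, expressed in terms of the transformed rate $m=c^{-1}\mu$, is $m\mapsto c^{-2}\lambda^{-1}v(cm)$ --- this is the scale-transformation rule $v\mapsto c^{-2}v(c\,\cdot\,)$ combined with the index factor $1/\lambda$, and it also follows directly from the equivariances (\ref{equivariance}). On the other hand, for fixed second argument the family $\mathrm{XD}(\,\cdot\,,g_{\lambda}(c))$ is the hazard location family with slope function $m\mapsto g_{\lambda}(c)^{-1}v(m)$. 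Since the hypothesis is required for all $\mu>0$ and $h\geq 0$, the common rate domain $\Psi=h(\mathcal{C})$ must be scale invariant, so $\Psi=\mathbb{R}_{+}$; and by Proposition \ref{hazard} the two slope functions coincide on $\mathbb{R}_{+}$, which gives
\begin{equation*}
v(cm)=\frac{c^{2}\lambda}{g_{\lambda}(c)}\,v(m)\qquad\text{for all }c,m>0\text{.}
\end{equation*}

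Next I would solve this functional equation. Setting $m=1$ (note $v(1)\neq 0$, since $v$ has constant sign on its domain) identifies the factor as $c^{2}\lambda/g_{\lambda}(c)=v(c)/v(1)$, so $w:=v/v(1)$ satisfies the multiplicative Cauchy equation $w(cm)=w(c)w(m)$ with $w(1)=1$. Because $v=h^{\prime}\circ h^{-1}$ is continuous (indeed $C^{1}$, as $G\in C^{2}$ and $h^{\prime}$ does not vanish), $w$ is a continuous positive solution, hence $w(x)=x^{p}$ for some $p\in\mathbb{R}$; thus $v(\mu)=v(1)\,\mu^{p}$ is a power slope function on $\mathbb{R}_{+}$. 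Convergence of the integral (\ref{substitute}) at $h(a)$ --- which equals $0$ in the IFR case and $\infty$ in the DFR case, since $\Psi=\mathbb{R}_{+}$ --- rules out $p=2$ and fixes the sign of $v(1)$, forcing $p<2$ with $v(1)>0$ in the IFR case and $p>2$ with $v(1)<0$ in the DFR case, so that $v(\mu)$ is a positive multiple of $\frac{1}{2-p}\mu^{p}$.

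Finally, since an extreme dispersion model is characterized by its unit slope function up to a rescaling of $\lambda$, and $\frac{1}{2-p}\mu^{p}$ is the unit slope function of $\mathrm{EV}_{\gamma}$ with $\gamma=(p-1)/(2-p)$ (Section \ref{secGEV}, equations (\ref{power})--(\ref{p(ga)})), the given model coincides, up to this reparametrization, with the generalized extreme value distribution $\mathrm{EV}_{\gamma}(\mu,\lambda)$. I expect the main obstacle to be the bookkeeping in the first step --- correctly recognizing each side of (\ref{scalexd}) as a hazard location family and tracking how the rate parameter and the domain transform --- together with the regularity argument ($v$ continuous) needed to pass from the multiplicative Cauchy equation to the power form; the remaining sign and exponent constraints are routine consequences of Remark \ref{remSubstitute}.
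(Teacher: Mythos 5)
Your proposal is correct and follows essentially the same route as the paper: equate the slope functions of the two sides of (\ref{scalexd}) using the equivariance (\ref{equivariance}), deduce $\Psi=\mathbb{R}_{+}$, reduce to the multiplicative Cauchy equation, and use continuity of $v$ to obtain the power form $v(\mu)=v(1)\mu^{p}$. The only (immaterial) difference is at the end: you exclude $p=2$ and pin down the sign of $v(1)$ directly via the integral test of Remark \ref{remSubstitute}, whereas the paper cites Proposition \ref{reflection} and Section \ref{secQHS} for the same exclusions --- both rest on the same convergence criterion, and your version is if anything slightly more self-contained.
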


\begin{proof}
First note that the rate $c^{-1}\mu $ on the right-hand side of (\ref%
{scalexd}) is consistent with (\ref{equivariance}). Since $c>0$ is arbitrary
this in turn implies that $\Psi =\mathbb{R}_{+}$. Without loss of generality
we may take $\lambda =1$. Calculating the slope function on both sides of (%
\ref{scalexd}) gives%
\begin{equation*}
c^{-2}v(\mu )=\frac{1}{g_{1}(c)}v(c^{-1}\mu )\text{.}
\end{equation*}%
This implies that $v$ satisfies the functional equation $v(x)v(y)=v(1)v(xy)$
for $x,y>0$. Using the continuity of $v$, the solution is $v(\mu )=c_{p}\mu
^{p}$, where $p\in \mathbb{R}$ and $c_{p}$ is an arbitrary non-zero constant
that may depend on $p$. When $p\neq 2$ and $c_{p}=1/(2-p)$ this
characterizes the generalized extreme value distribution $\mathrm{EV}%
_{\gamma }(\mu ,\lambda )$. Other choices for $c_{p}$ with the same sign
correspond to a scale change. Changing the sign to $c_{p}=-1/(2-p)$ is
possible only for a right censored survival function (Proposition \ref%
{reflection}), which is incompatible with the condition $\Psi =\mathbb{R}%
_{+} $. The case $p=2$, which has been dealt with in Section \ref{secQHS},
also is not compatible with the condition $\Psi =\mathbb{R}_{+}$. It easily
follows that $g_{\lambda }(c)=\lambda c^{2-p}$, in agreement with (\ref{cEV}%
).
\end{proof}

\section{Convergence of extremes\label{secConv}}

\subsection{General convergence theorem}

The results of the previous section show that the Tweedie and generalized
extreme value distributions share certain properties due to the common form
of their variance and slope functions. We shall now complete this analogy by
showing a convergence theorem for slope functions, which in turn leads to a
new proof of the extreme value convergence theorem, along the same lines as
the Tweedie convergence theorem of J\o rgensen et al.~(1994).

The use of variance functions for proving convergence for natural
exponential families was initiated by Morris (1982), but a rigorous
formulation and proof was first given by Mora (1990). The convergence
theorem for variance functions says that if a sequence of variance functions
converges uniformly on compact sets, then the corresponding sequence of
natural exponential families converges to the family corresponding to the
limiting variance function. We have the following analogous result for slope
functions. The proof is given in an Appendix.

\begin{theorem}
\label{vntov}Let $v_{n}$, $\Psi _{n}=(\underline{\eta }_{n},\overline{\eta }%
_{n})$ be a sequence of slope functions and their respective domains, all
IFR (DFR), such that $\Psi =\mathrm{int}\left( \lim_{n\rightarrow \infty
}\Psi _{n}\right) $ exists and is non-empty, where $\lim_{n\rightarrow
\infty }\Psi _{n}$ means that each of the two sequences of endpoints
converges. Assume that $v_{n}$ converges on $\Psi $, uniformly on compact
subintervals of $\Psi $, to a function $v$ which is strictly positive
(strictly negative) on $\Psi $. Assume that $v_{n}$ satisfies the following
left tightness condition. For each $k>0$ there exists an $\eta \in \Psi $
such that for all $n$%
\begin{equation}
\int_{I_{n}(\eta )}\frac{\mu }{\left\vert v_{n}(\mu )\right\vert }\,d\mu <k%
\text{,}  \label{tight}
\end{equation}%
where $I_{n}(\eta )=(\underline{\eta }_{n},\eta )$ ($I_{n}(\eta )=(\eta ,%
\overline{\eta }_{n})$). Then the corresponding sequence of hazard location
families $\mathrm{HL}_{n}(\mu )$ converges weakly for each $\mu \in \Psi $,
uniformly on compact subintervals of the support, to the hazard location
family $\mathrm{HL}(\mu )$ with slope function $v$.
\end{theorem}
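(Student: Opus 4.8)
The plan is to express everything in \emph{hazard coordinates} via Remark \ref{remSubstitute}, and then reduce to the convergence of the inverse hazard functions, in direct analogy with Mora's (1990) proof for variance functions. Fix $\mu\in\Psi$ and let $F_n(\cdot\,;\mu)$ be the survival function of $\mathrm{HL}_n(\mu)$, so that $-\log F_n(y;\mu)=H_n(y+h_n^{-1}(\mu))$. Since $H_n$ vanishes at $a_n$ (as $G_n$ is continuous at $a_n$ with value $1$) and $h_n(a_n)=\underline{\eta}_n$ in the IFR case, Remark \ref{remSubstitute} with $c=a_n$ and $d=y+h_n^{-1}(\mu)$ gives
\begin{equation*}
-\log F_n(y;\mu)=\int_{\underline{\eta}_n}^{\mu_n(y)}\frac{t}{v_n(t)}\,dt ,
\qquad\text{where } \int_{\mu}^{\mu_n(y)}\frac{dt}{v_n(t)}=y ,
\end{equation*}
$\mu_n(y):=h_n(y+h_n^{-1}(\mu))$ being the hazard of $\mathrm{HL}_n(\mu)$ at $y$, which is the unique solution of the separated equation on the right because $v_n$ has constant sign. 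This representation depends only on $v_n$. The three steps are: (i) check that the limit $v$ is itself an admissible slope function, so that $\mathrm{HL}(\mu)$ and an analogous identity for $F(\cdot\,;\mu)$ exist; (ii) prove $\mu_n(y)\to\mu_\infty(y)$ locally uniformly; (iii) pass to the limit in the integral, controlling the left endpoint with the tightness condition (\ref{tight}).

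For step (i): $v$ is continuous, being a locally uniform limit of the continuous functions $v_n$, and strictly positive, hence bounded away from $0$ on compact subintervals of $\Psi$. Applying Fatou's lemma to $t/v_n(t)$ and using $\underline{\eta}_n\to\underline{\eta}$, the left tightness condition (\ref{tight}) yields $\int_{\underline{\eta}}^{\eta}t/v(t)\,dt<\infty$ for $\eta\in\Psi$; by Remark \ref{remSubstitute} this is exactly what is needed to define $h^{-1}(\mu)=\int_{\mu_0}^{\mu}dt/v(t)$ for a fixed $\mu_0\in\Psi$, invert it to a $C^1$ hazard function $h$ on $\mathcal C^{*}:=h^{-1}(\Psi)=(a^{*},b^{*})$, and set $G=\exp\{-\int_{\underline{\eta}}^{h(\cdot)}t/v(t)\,dt\}$; one checks $G\in\overline{\mathcal G}$ with slope function $v$, and this $G$ generates $\mathrm{HL}(\mu)$ by Proposition \ref{hazard} and Definition \ref{defHL}, with $-\log F(y;\mu)=\int_{\underline{\eta}}^{\mu_\infty(y)}t/v(t)\,dt$ and $\int_{\mu}^{\mu_\infty(y)}dt/v(t)=y$.

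For step (ii): fix a compact $[y_1,y_2]\subset(a^{*}-h^{-1}(\mu),\,b^{*}-h^{-1}(\mu))$, the support of $\mathrm{HL}(\mu)$. Then $\{\mu_\infty(y):y\in[y_1,y_2]\}$ is contained in a compact subinterval of $\Psi$ on which, for $n$ large, $v_n$ is defined, converges uniformly to $v$, and stays bounded away from $0$. Since $\mu_n(\cdot)$ and $\mu_\infty(\cdot)$ are the inverses of the strictly monotone maps $w\mapsto\int_{\mu}^{w}dt/v_n(t)$ and $w\mapsto\int_{\mu}^{w}dt/v(t)$, which converge uniformly on that subinterval, the inverses converge uniformly, i.e. $\mu_n\to\mu_\infty$ uniformly on $[y_1,y_2]$. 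Comparing support endpoints, $a_n-h_n^{-1}(\mu)=-\int_{\underline{\eta}_n}^{\mu}dt/v_n(t)\to a^{*}-h^{-1}(\mu)$ (split at an interior $\eta_0$, handle $(\underline{\eta}_n,\eta_0)$ by tightness as in step (iii), and note the limit is $-\infty$ when the limiting integral diverges), while $\liminf_n\big(b_n-h_n^{-1}(\mu)\big)\ge b^{*}-h^{-1}(\mu)$ by locally uniform convergence and monotone exhaustion; hence $[y_1,y_2]$ lies in the support of $\mathrm{HL}_n(\mu)$ for all large $n$.

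For step (iii) and the conclusion: choose $\eta_0\in\Psi$ below $\min_{y\in[y_1,y_2]}\mu_\infty(y)$ and write
\begin{equation*}
-\log F_n(y;\mu)=\int_{\underline{\eta}_n}^{\eta_0}\frac{t}{v_n(t)}\,dt
+\int_{\eta_0}^{\mu_n(y)}\frac{t}{v_n(t)}\,dt .
\end{equation*}
Given $\varepsilon>0$, the left tightness condition makes the first term $<\varepsilon/3$ for all $n$ once $\eta_0$ is close enough to $\underline{\eta}$, and by the finiteness from step (i) also $\int_{\underline{\eta}}^{\eta_0}t/v(t)\,dt<\varepsilon/3$; for this fixed $\eta_0$ the second term converges to $\int_{\eta_0}^{\mu_\infty(y)}t/v(t)\,dt$ uniformly in $y\in[y_1,y_2]$, by the uniform convergences $v_n\to v$ and $\mu_n\to\mu_\infty$. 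Thus $-\log F_n(\cdot\,;\mu)\to-\log F(\cdot\,;\mu)$, hence (as $F(\cdot\,;\mu)$ is bounded away from $0$ on $[y_1,y_2]$) $F_n(\cdot\,;\mu)\to F(\cdot\,;\mu)$, uniformly on $[y_1,y_2]$; together with $F_n\equiv1\equiv F$ to the left of the support for $n$ large, this is the asserted weak convergence, uniform on compact subintervals of the support. I expect the main obstacle to be precisely the behaviour at the left endpoint: the condition (\ref{tight}) has to simultaneously guarantee that $v$ is admissible, that no probability mass escapes in the limit, and that the left support endpoints converge, and these have to be coordinated; the ``bulk'' convergence on compact subintervals of $\Psi$ is routine once the hazard-coordinate representation is in place. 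The DFR case is identical after reversing the inequalities and replacing $\underline{\eta}$ by $\overline{\eta}$ throughout.
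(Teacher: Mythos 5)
Your proposal is correct and follows essentially the same route as the paper's Appendix proof: both reconstruct the (inverse) hazard from $v_n$ via (\ref{hv}) and (\ref{substitute}), deduce locally uniform convergence of the hazard functions from that of the slope functions, and use the left tightness condition together with Fatou's lemma to define the limiting integrated hazard and to squeeze $H_n$ near the left support endpoint. The only cosmetic difference is that you package everything into a single integral identity in hazard coordinates anchored at the rate $\mu$, where the paper anchors at a fixed $\mu_0$ and treats the convergence of $h_n$ and of $H_n$ as two separate applications of the theorem on uniform convergence of derivatives.
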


The tightness condition (\ref{tight}) originates from the identity (\ref%
{substitute}). The following remark shows that a similar condition is useful
for determining if the limiting family is right censored.

\begin{remark}
Under the assumptions of Theorem \ref{vntov}, we consider the following
right tightness condition. For each $k>0$ there exists an $\eta \in \Psi $
such that for all $n$%
\begin{equation*}
\int_{I_{n}(\eta )}\frac{\mu }{\left\vert v_{n}(\mu )\right\vert }\,d\mu >k%
\text{,}
\end{equation*}%
where $I_{n}(\eta )=(\eta ,\overline{\eta }_{n})$ (IFR case) or $I_{n}(\eta
)=(\underline{\eta }_{n},\eta )$ (DFR case). Then for every $k>0$ there
exists a $c\in \mathcal{C}$ such that $H_{n}(c)>k$ for all $n$. This, in
turn, implies that $H(c)=\lim H_{n}(c)>k$, and hence $h(b-)=\infty $. This
implies no right censoring, so in particular the limiting distribution is
proper.
\end{remark}

\subsection{Extreme convergence theorem}

Let $\gamma \neq -1$ be given, and let $p=p(\gamma )\neq 2$, according to (%
\ref{p(ga)}). Choosing $c$ in the scaling formula (\ref{cEV}) such that $%
n=c^{2-p}$ is an integer, we obtain%
\begin{equation}
n^{1/(p-2)}\mathrm{EV}_{\gamma }(n^{1/(p-2)}\mu ,n\lambda )=\mathrm{EV}%
_{\gamma }(\mu ,\lambda )\text{.}  \label{(*)}
\end{equation}%
Now recall the min reproductive property (\ref{repro}), by which the
left-hand side of (\ref{(*)}) represents a centering and scaling of the
scaled min $\hat{Y}_{n}$ for a sample of size $n$ from $\mathrm{EV}_{\gamma
}(\mu ,\lambda )$. In effect (\ref{(*)}) represents the so-called \emph{%
stability postulate} for the limiting distribution of extremes, cf. Kotz and
Nadarajah (2002, p.~5). The corresponding domains of attraction correspond
to the classical extreme convergence result, which in the present setup
takes the following form.

\begin{theorem}
\label{exconv}Let $\mathtt{\mathrm{XD}}(\mu ,\lambda )$ be an extreme
dispersion model having unit slope function $v$ with power asymptotics of
the form 
\begin{equation}
v(\mu )\sim \frac{1}{2-p}\mu ^{p}  \label{poweras}
\end{equation}%
as $\mu \rightarrow 0$ (IFR case with $p<2$) or $\mu \rightarrow \infty $
(DFR case with $p>2$). Then for any $\mu ,\lambda >0$ 
\begin{equation}
n^{1/(p-2)}\mathtt{\mathrm{XD}}(n^{1/(p-2)}\mu ,n\lambda )\overset{w}{%
\longrightarrow }\mathrm{EV}_{\gamma }(\mu ,\lambda )\text{ as }n\rightarrow
\infty \text{,}  \label{EVconv}
\end{equation}%
where $\overset{w}{\longrightarrow }$ denotes weak convergence.
\end{theorem}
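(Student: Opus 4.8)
The plan is to deduce this from the general slope-function convergence theorem (Theorem \ref{vntov}) applied to the sequence obtained by the centering-and-scaling operation on the left-hand side of (\ref{EVconv}). First I would identify the slope function $v_n$ of the model $n^{1/(p-2)}\mathtt{\mathrm{XD}}(n^{1/(p-2)}\mu,n\lambda)$. Combining the scale-transformation rule $v\mapsto c^{-2}v(c\mu)$ with $c=n^{1/(p-2)}$ and the multiplication rule $v\mapsto v/n$ (generating the index-$n\lambda$ member from the index-$\lambda$ member), one gets
\begin{equation*}
v_n(\mu)=n^{-2/(p-2)}\,\frac{1}{n}\,v\!\left(n^{1/(p-2)}\mu\right)=n^{-p/(p-2)}\,v\!\left(n^{1/(p-2)}\mu\right).
\end{equation*}
Writing $\mu_n=n^{1/(p-2)}\mu$, the power asymptotics (\ref{poweras}) say $v(\mu_n)\sim\frac{1}{2-p}\mu_n^{p}=\frac{1}{2-p}\,n^{p/(p-2)}\mu^{p}$, in the IFR case $p<2$ (so $n^{1/(p-2)}\to0$ and $\mu_n\to0$) and in the DFR case $p>2$ (so $\mu_n\to\infty$). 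Hence $v_n(\mu)\to\frac{1}{2-p}\mu^{p}$ pointwise on $\mathbb{R}_+$, which is exactly the unit slope function (\ref{power}) of $\mathrm{EV}_\gamma(\mu,\lambda)$. A short argument using the eventual monotonicity or the locally uniform nature of the asymptotic equivalence in (\ref{poweras}) upgrades this to uniform convergence on compact subintervals of $\mathbb{R}_+$, and the limit is strictly positive (IFR) or strictly negative (DFR) there, so the core hypotheses of Theorem \ref{vntov} hold with $\Psi=\mathbb{R}_+$.

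Next I would verify the left tightness condition (\ref{tight}). By the substitution identity (\ref{substitute}), $\int_{I_n(\eta)}\mu/|v_n(\mu)|\,d\mu$ equals the increment of the integrated hazard $H_n$ between the relevant endpoint of the support and the point mapping to $\eta$; after the change of variables $\mu=n^{1/(p-2)}\nu$ one checks that this integral is, up to the rescaling, the corresponding integral $\int\nu/|v(\nu)|\,d\nu$ for the original model $\mathtt{\mathrm{XD}}(\mu,\lambda)$ taken over a shrinking (IFR) or expanding (DFR) interval near the critical end. Since $v(\nu)\sim\frac{1}{2-p}\nu^p$ there, the integrand behaves like $(2-p)\nu^{1-p}$; for $p<2$ this is integrable at $0$ and for $p>2$ it is integrable at $\infty$ (the exponent $1-p<-1$), so the tail contribution can be made smaller than any prescribed $k$ by choosing $\eta$ appropriately, uniformly in $n$. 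This is exactly the calculation that underlies the classical domain-of-attraction conditions, recast here in slope-function language. The main obstacle, and the step requiring genuine care rather than routine manipulation, is precisely this tightness verification: one must handle the interplay between the $n$-dependent rescaling of the domain and the mere asymptotic (not exact power) form of $v$, and confirm the bound holds simultaneously for all $n$ rather than just in the limit.

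Finally, with the hypotheses of Theorem \ref{vntov} in place, that theorem delivers weak convergence of the hazard location families $\mathrm{HL}_n(\mu)$ to $\mathrm{HL}(\mu)$ with slope function $\frac{1}{2-p}\mu^p$, uniformly on compact subintervals of the support. It remains to translate this back to the extreme dispersion model statement (\ref{EVconv}): for the fixed index parameter $\lambda$ (equivalently, working at $\lambda=1$ and then applying the multiplication rule), $\mathrm{HL}_n(\mu)$ is the member of $n^{1/(p-2)}\mathtt{\mathrm{XD}}(n^{1/(p-2)}\mu,n\lambda)$, and the limiting hazard location family with power slope function $\frac{1}{2-p}\mu^p$ is, by the characterization established in Theorem \ref{XDscale} and the discussion of Table \ref{extreme}, exactly $\mathrm{EV}_\gamma(\mu,\lambda)$ with $p=p(\gamma)$ as in (\ref{p(ga)}). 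Since weak convergence of survival functions uniformly on compacts of the support implies ordinary weak convergence, this yields (\ref{EVconv}). I would close by noting that the identity (\ref{(*)}) shows the scheme is consistent — when $\mathtt{\mathrm{XD}}$ is itself $\mathrm{EV}_\gamma$ the left-hand side of (\ref{EVconv}) is constantly equal to $\mathrm{EV}_\gamma(\mu,\lambda)$ — so the theorem is the stable-law analogue expressing the classical Fisher--Tippett--Gnedenko domains of attraction in the present framework.
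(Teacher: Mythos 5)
Your proposal is correct and follows essentially the same route as the paper's proof: compute the slope function $v_n(\mu)=\frac{1}{\lambda n^{p/(p-2)}}v(n^{1/(p-2)}\mu)$ of the rescaled model, upgrade the power asymptotics to uniform convergence on compacts to $\frac{1}{\lambda(2-p)}\mu^{p}$, verify the tightness condition via the integrability of $\mu^{1-p}$ at the relevant endpoint, and invoke Theorem \ref{vntov}. Your treatment of the tightness integral (explicit change of variables and attention to uniformity in $n$) is slightly more detailed than the paper's, but the argument is the same.
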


\begin{proof}
Consider the IFR case $p<2$, where the power asymptotics holds near $0$. For
fixed values of $\lambda $ and $n$, the left-hand side of (\ref{EVconv}) is
a hazard location family with slope function 
\begin{equation*}
v_{n}(\mu )=\frac{1}{\lambda n^{p/(p-2)}}v(n^{1/(p-2)}\mu )\longrightarrow 
\frac{1}{\lambda \left( 2-p\right) }\mu ^{p}\text{ as }n\rightarrow \infty 
\text{,}
\end{equation*}%
where we have used the scaling property of the slope in (\ref{equivariance}%
). The pointwise convergence follows from (\ref{poweras}). To show that the
convergence is uniform in $\mu $ on compact subsets of $\mathbb{R}_{+}$, let 
$0<\mu <m$ for given $m>0$. For given $\varepsilon >0$ let $\mu _{0}$ be
such that 
\begin{equation*}
\left\vert \frac{v(\mu )}{\mu ^{p}}-\frac{1}{2-p}\right\vert <\varepsilon
\end{equation*}%
for $\mu <\mu _{0}$, by the assumption of power asymptotics. Then for any $n$
large enough to make $n^{1/(p-2)}<\mu _{0}/m$ we find 
\begin{equation*}
\left\vert \frac{v(n^{1/(p-2)}\mu )}{n^{p/(p-2)}}-\frac{\mu ^{p}}{2-p}%
\right\vert =\mu ^{p}\left\vert \frac{v(n^{1/(p-2)}\mu )}{\left( \mu
n^{1/(p-2)}\right) ^{p}}-\frac{1}{2-p}\right\vert \leq m^{p}\varepsilon
\end{equation*}%
for all $\mu <m$, which shows the uniform convergence. Since we are in the
IFR case, the tightness condition (\ref{tight}) involves the integral 
\begin{equation*}
\int_{0}^{\eta }\frac{\lambda n^{p/(p-2)}\mu }{v(n^{1/(p-2)}\mu )}\,d\mu 
\text{.}
\end{equation*}%
Since the integrand behaves asymptotically like the power $\mu ^{1-p}$,
which is integrable on $(0,\eta )$ for $p<2$, the tightness condition is
satisfied. The convergence (\ref{EVconv}) hence follows from Theorem \ref%
{vntov}. The proof in the DFR case $p>2$ is similar.
\end{proof}

Compared with conventional extreme value results, the framework of Theorem %
\ref{exconv} is very convenient, albeit under the rather strong conditions
of differentiability of the density, and monotone hazard rate. We note that
the condition (\ref{poweras}) seamlessly integrates the Gumbel case ($p=1$)
with the rest, whereas the exponential case is not included, see Remark \ref%
{expoconv}.

We note that the approach leads to the new centering constant $\theta
=-h^{-1}(n^{1/(p-2)}\mu )$, the location parameter appearing in (\ref{XDdef}%
), which corresponds to keeping the rate constant at the value $\mu $
throughout the convergence (\ref{EVconv}).

In simple cases, like in Table \ref{quadratic}, it is very easy to read off
the asymptotic behaviour of the slope function. For example, the asymptotic
behaviour of $v$ near $0$ for the logistic and negative exponential
distributions is $v(\mu )\sim \mu $, so both are in the domain of attraction
of the Gumbel distribution. Further examples are considered below.

\begin{remark}
The von Mises conditions are sufficient conditions involving the density $f$
for extreme value convergence. For $G\in \overline{\mathcal{G}}$ with
support $(a,b)$, the version of the Gumbel condition proposed by Falk and
Marohn (1993) is (keeping in mind that we use min rather than max)%
\begin{equation*}
\lim_{y\downarrow a}\frac{f(y)}{1-G(y)}=c
\end{equation*}%
for some $c>0$. By l'Hospital's rule this is equivalent to 
\begin{equation}
\lim_{y\downarrow a}\frac{h^{\prime }(y)}{h(y)}=c\text{.}  \label{hprime}
\end{equation}%
By inserting $y=h^{-1}(\mu )$, we find that (\ref{hprime}) is equivalent to (%
\ref{poweras}) with $p=1$. The situation for $p\neq 1$ is, however, less
clear. For $a=0$ the Gumbel condition is 
\begin{equation*}
\lim_{y\downarrow 0}\frac{yf(y)}{1-G(y)}=-\gamma ^{-1}>0\text{,}
\end{equation*}%
or equivalently, with an application of l'Hospital's rule, 
\begin{equation*}
\lim_{y\downarrow 0}\frac{yh^{\prime }(y)}{h(y)}=-1-\gamma ^{-1}\text{.}
\end{equation*}%
This condition apparently cannot be expressed conveniently in terms of the
slope function $v$.
\end{remark}

\begin{remark}
\label{expoconv}Contrary to conventional extreme value convergence theory,
our framework separates out the case of exponential convergence, and
Proposition \ref{xdtoexe} illustrates how exponential convergence is
prompted by left truncation, see also (\ref{conditional}). The uniform and
Pareto examples from Table \ref{quadratic} illustrate that distributions in
the domain of attraction of the exponential distribution have incomplete $%
\Psi $, with $\inf \Psi >0$ (IFR case) or $\sup \Psi <\infty $ (DFR case).
In the case of the uniform distribution with the exponential component
removed (\ref{expaway}), the new slope function $(1+\mu )^{2}$ satisfies (%
\ref{poweras}) with $p=0$, and so is in the domain of attraction of the
Rayleigh distribution.
\end{remark}

J\o rgensen and Mart\'{\i}nez (1997) developed Tauberian methods for
variance functions, where power asymptotics for $V$ is replaced by regular
variation. This could be developed in the present setting along the lines of
de Haan (1970), but is outside the scope of the present paper.

\subsection{Examples}

Let us consider two further examples of extreme value convergence that
illustrate Theorem \ref{exconv}. First we consider the \emph{negative Pareto
distribution}\textbf{\ }with survival function $G(y)=1-\left( 1-y\right)
^{-1}$ for $y<0$. Straightforward calculations show that the corresponding
slope function is%
\begin{equation}
v(\mu )=\mu \sqrt{\mu ^{2}+4\mu }\text{ for }\mu >0\text{,}  \label{Letac}
\end{equation}%
which behaves like $2\mu ^{3/2}$ near $0$. Letting $\mathtt{\mathrm{XD}}(\mu
,\lambda )$ denote the extreme dispersion model corresponding to $G$, an
application of Theorem \ref{exconv} yields Fr\'{e}chet convergence,%
\begin{equation*}
n^{-2}\mathtt{\mathrm{XD}}(n^{-2}\mu ,n\lambda )\overset{w}{\longrightarrow }%
\mathrm{EV}_{1}(\mu ,\lambda )\text{ as }n\rightarrow \infty \text{.}
\end{equation*}%
It is worth noting that $v$ in (\ref{Letac}) is of the so-called \emph{Letac
form} (J\o rgensen, 1997, pp.~157--158), a class of variance functions that
has been extensively studied, see e.g. Kokonendji (1994).

Next, we consider the \emph{Burr distribution }with survival function\textsl{%
\ }$G(y)=\left( 1+y^{\alpha }\right) ^{-1}$ for $y>0$, for some $\alpha >0$,
which is DFR for $0<\alpha \leq 1$. An explicit expression for the slope
function may be found in the case $\alpha =1/2$, where%
\begin{equation*}
v(\mu )=-\mu ^{2}\left( \mu +2+\sqrt{\mu ^{2}+2\mu }\right) \text{ for }\mu
>0.
\end{equation*}%
The asymptotic behaviour is $v(\mu )\sim -2\mu ^{3}$ as $\mu \rightarrow
\infty $. An application of Theorem \ref{exconv} yields Weibull convergence
with $\gamma =-2$,%
\begin{equation*}
n\mathtt{\mathrm{XD}}(n\mu ,n\lambda )\overset{w}{\longrightarrow }\mathrm{EV%
}_{-2}(\mu ,\lambda /2)\text{ as }n\rightarrow \infty \text{,}
\end{equation*}%
where $\mathtt{\mathrm{XD}}(\mu ,\lambda )$ denotes the extreme dispersion
model generated by $G$. For $0<\alpha <1$ the behaviour of $v$ is like $-\mu
^{p}$ with $p=(\alpha -2)/(\alpha -1)>2$, and (\ref{EVconv}) applies.

For $\alpha >1$ the Burr hazard is not monotone, but is for $y$ near $0$.
Hence by a suitable right censoring, we obtain an IFR model with asymptotic
behaviour $\mu ^{p}$ for $v$ with $p<1$. In general Theorem \ref{exconv} may
be applied in this way to models with non-monotone hazard as long as the
hazard is monotone near $0$.

\section{Exponential slope functions\label{SecExponential}}

We now consider characterization and convergence for exponential slope
functions, similar to J\o rgensen's (1997, p.~160) characterization of
exponential variance functions. These results have independent interest,
since exponential variance functions correspond to natural exponential
families generated by extreme stable distributions with stability index $%
\alpha =1$.

\subsection{Characterization}

Elaborating on the parallel between the Rayleigh and normal distributions,
we note that the latter satisfies the following transformation property: 
\begin{equation*}
\mathrm{N}(m+\mu ,\sigma ^{2})-m=\mathrm{N}(\mu ,\sigma ^{2})
\end{equation*}%
for all $m\in \mathbb{R}$, imitating (\ref{(*)}), but with multiplication
replaced by addition. From (\ref{CLT}), we would expect in the Rayleigh IFR
case that the term $-m$ corresponds to a left truncation followed by the
removal of an exponential component. More generally, given $\mathrm{XD}(\mu
,\lambda )$ with unit slope function $v$ on $\psi =\mathbb{R}_{+}$, we
consider the \emph{shift transformation}, defined by the following two steps.

\begin{enumerate}
\item Left truncation (IFR) or right censoring (DFR), which restricts the
domain to $\mu >m$, while maintaining the slope at $\lambda ^{-1}v(\mu )$.
This gives rise to an exponential component.

\item Removing the exponential component, giving the rate $\mu >0$ and slope 
$\lambda ^{-1}v(m+\mu )$.
\end{enumerate}

The result is an extreme dispersion model $\mathrm{XD}_{m}(\mu ,\lambda )$
with unit slope function $v(m+\cdot )$. We now characterize exponential
slope functions as fixed points for the shift transformation.

\begin{theorem}
\label{exponential}Let \textrm{$XD$}$(\mu ,\lambda )$ have unit slope
function $v$ and domain $\Psi =\mathbb{R}_{+}$. If for some $\lambda >0$
there exists a positive function $g_{\lambda }(m)$ such that for all $m,\mu
>0$ 
\begin{equation}
\mathrm{XD}_{m}(\mu ,g_{\lambda }(m))=\mathrm{XD}(\mu ,\lambda )\text{,}
\label{XDmuplusc}
\end{equation}%
then the unit slope function $v$ is either constant or exponential.
\end{theorem}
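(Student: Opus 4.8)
The plan is to convert the model identity (\ref{XDmuplusc}) into a functional equation for the unit slope function $v$ by way of Proposition~\ref{hazard}, and then solve that equation. First I would check that the shift transformation --- left truncation (IFR) or right censoring (DFR) at $h^{-1}(m)$, followed by removal of the resulting exponential component --- produces a model $\mathrm{XD}_m(\mu,\lambda)$ whose unit slope function is $v(m+\cdot)$ with domain \emph{still} equal to $\mathbb{R}_+$. Indeed, the truncation/censoring step restricts the slope domain to $(m,\infty)$ and makes the lower endpoint of the support finite (equal to $h^{-1}(m)$ in the IFR case, equal to $a>-\infty$ in the DFR case), so Proposition~\ref{excompo} (together with Remark~\ref{minexp}) applies with its parameter taken equal to $-m$, shifting the slope function to $v(m+\cdot)$ and the domain back to $(0,\infty)$; the upper endpoint of $\Psi$ is untouched because it already equals $+\infty$. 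Consequently, for fixed index parameter $g_\lambda(m)$ the family $\mathrm{XD}_m(\cdot,g_\lambda(m))$ is a hazard location family with slope function $\mu\mapsto g_\lambda(m)^{-1}v(m+\mu)$, while $\mathrm{XD}(\cdot,\lambda)$ is a hazard location family with slope function $\mu\mapsto\lambda^{-1}v(\mu)$. Since (\ref{XDmuplusc}) identifies these two families and the slope function is an intrinsic invariant (Proposition~\ref{hazard}), I obtain
\begin{equation*}
\frac{1}{g_\lambda(m)}\,v(m+\mu)=\frac{1}{\lambda}\,v(\mu)\qquad\text{for all }m,\mu>0 .
\end{equation*}

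The second step is to solve this. Writing $\phi(m)=g_\lambda(m)/\lambda>0$, the identity reads $v(m+\mu)=\phi(m)\,v(\mu)$. Interchanging $m$ and $\mu$ and comparing gives $\phi(m)/v(m)=\phi(\mu)/v(\mu)$ (legitimate since $v$ is strictly one-signed on $\mathbb{R}_+$ and hence never zero); call this common value $\kappa$, so that $v(m+\mu)=\kappa\,v(m)v(\mu)$. Setting $w=\kappa v$, one checks $w>0$ throughout ($\kappa>0$ in the IFR case, $\kappa<0$ in the DFR case) and $w(m+\mu)=w(m)w(\mu)$ for all $m,\mu>0$. Because $G\in\overline{\mathcal{G}}$ is twice continuously differentiable, $h=H'$ is $C^1$ and strictly monotone, so $v=h'\circ h^{-1}$ --- and therefore $w$ --- is continuous; the only continuous positive solutions of the Cauchy equation $w(m+\mu)=w(m)w(\mu)$ on $(0,\infty)$ are $w(\mu)=e^{\beta\mu}$, $\beta\in\mathbb{R}$. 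Hence $v(\mu)=\kappa^{-1}e^{\beta\mu}$, which is constant when $\beta=0$ and exponential when $\beta\neq0$.

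The functional-equation manipulation and the appeal to the classical solution of the exponential Cauchy equation are routine; the step I expect to be the main obstacle is the first one --- establishing cleanly that the shift transformation yields unit slope function $v(m+\cdot)$ on the \emph{full} domain $\mathbb{R}_+$. This requires verifying that the finiteness conditions on the support endpoints needed to introduce and then remove an exponential component (Proposition~\ref{excompo} demands $a>-\infty$ for the relevant survival function) genuinely hold after the truncation or censoring step, and that no trimming of the upper end of $\Psi$ occurs; only with the domain preserved does Proposition~\ref{hazard} deliver the functional equation on all of $\mathbb{R}_+$, as opposed to merely on a subinterval, which would not suffice to force the exponential form.
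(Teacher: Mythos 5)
Your proposal is correct and follows essentially the same route as the paper: identify the unit slope function of $\mathrm{XD}_m(\mu,g_\lambda(m))$ as $g_\lambda(m)^{-1}v(m+\cdot)$, equate it with $\lambda^{-1}v$ via the characterization of Proposition~\ref{hazard}, and solve the resulting functional equation by reducing it to the multiplicative Cauchy equation for a continuous function. The only (harmless) variation is in how you eliminate $g_\lambda$: you exploit the symmetry of $v(m+\mu)$ in $m$ and $\mu$ to get $g_\lambda(m)\propto v(m)$, whereas the paper lets $\mu\downarrow 0$ to identify $g_1(m)=v(m)/v(0+)$; your added care in checking that the shift transformation really returns the full domain $\mathbb{R}_+$ is a point the paper leaves implicit.
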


\begin{proof}
By calculating the slope on both sides of (\ref{XDmuplusc}) we obtain the
equation%
\begin{equation*}
\frac{1}{g_{\lambda }(m)}v(m+\mu )=\lambda ^{-1}v(\mu )\text{.}
\end{equation*}%
Without loss of generality we may take $\lambda =1$. By letting $\mu
\downarrow 0$ and using the continuity of $v$, we find that the limit $v(0+)$
exists, is positive and finite, and $g_{1}(m)=v(m)/v(0+)$. This, in turn,
implies that $v$ satisfies the functional equation $v(0+)v(m+\mu )=v(m)v(\mu
)$ for all $m,\mu >0$. Taking into account the continuity of $v$, the
solution is 
\begin{equation}
v(\mu )=v(0+)e^{\beta \mu }  \label{vexpo}
\end{equation}%
for some $\beta \in \mathbb{R}$, which in turn implies that (\ref{XDmuplusc}%
) holds for all $\lambda >0$ with $g_{\lambda }(m)=\lambda e^{\beta m}$.
\end{proof}

Besides the Rayleigh case ($\beta =0$) there are two main cases of (\ref%
{vexpo}), one IFR and one DFR. The IFR case has unit slope function $v(\mu
)=e^{-\mu }$ for $\mu >0$, and corresponds to the extreme dispersion model
generated from the survival function 
\begin{equation}
G(y)=e^{y}\left( 1+y\right) ^{-\left( 1+y\right) }\text{ for }y>0\text{.}
\label{IFR}
\end{equation}%
The DFR case has unit slope function $v(\mu )=-e^{\mu }$ for $\mu >0$, and
corresponds to the extreme dispersion model generated from the survival
function 
\begin{equation}
G(y)=e^{-y}y^{y}\text{ for }0<y<1\text{,}  \label{DFR}
\end{equation}%
which is right censored at $1$.

Note that by applying a suitable location and scaling operation to the power
slope function (\ref{power}) for $p>2$ we obtain%
\begin{equation*}
-\left( 1+\frac{\mu }{p}\right) ^{p}\rightarrow -e^{\mu }\text{ for }%
p\rightarrow \infty \text{,}
\end{equation*}%
which shows that the DFR case of (\ref{vexpo}) is a limiting case of the
generalized extreme value family. A similar result holds in the IFR case.

\subsection{Convergence\label{secExposlope}}

We now show a convergence theorem for exponential slope functions, similar
to a result for exponential variance functions (J\o rgensen, 1997, p.~164).
In effect, the fixed point (\ref{XDmuplusc}) has a domain of attraction
consisting of models with asymptotically exponential slope functions.

\begin{theorem}
Let $\mathrm{XD}(\mu ,\lambda )$ denote an extreme dispersion model with
unit slope function $v$ and domain $\Psi =\mathbb{R}_{+}$ having exponential
asymptotics of the form%
\begin{equation}
v(\mu )\sim c_{\beta }e^{\beta \mu }  \label{exponentiala}
\end{equation}%
for $\mu \rightarrow \infty $, where $c_{\beta }=1$ for $\beta \leq 0$ and $%
c_{\beta }=-1$ for $\beta >0$. Then the shifted model \textrm{$XD$}$_{m}(\mu
,\lambda e^{\beta m})$ converges to an extreme dispersion model with
exponential slope function for $m\rightarrow \infty $.
\end{theorem}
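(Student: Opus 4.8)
The plan is to mimic the proof of Theorem~\ref{exconv}, applying the general convergence result (Theorem~\ref{vntov}) to the sequence of slope functions obtained from the shift transformation, and using Theorem~\ref{exponential} to identify the limit. First I would write down explicitly the unit slope function of the shifted model $\mathrm{XD}_{m}(\mu ,\lambda e^{\beta m})$. By the two-step description of the shift transformation (left truncation/right censoring, then removal of the exponential component), the unit slope function of $\mathrm{XD}_{m}(\mu ,\lambda )$ is $\mu \mapsto v(m+\mu )$ on $\mathbb{R}_{+}$; rescaling the index parameter from $\lambda$ to $\lambda e^{\beta m}$ multiplies the slope by $e^{-\beta m}$, so the relevant sequence (indexed by $m\to\infty$) is
\begin{equation*}
v_{m}(\mu )=e^{-\beta m}\,v(m+\mu )\text{,}\qquad \mu \in \mathbb{R}_{+}\text{.}
\end{equation*}

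Next I would verify pointwise convergence: by the exponential asymptotics (\ref{exponentiala}), for fixed $\mu>0$ we have $v(m+\mu)\sim c_{\beta}e^{\beta(m+\mu)}$ as $m\to\infty$, hence $v_{m}(\mu)\to c_{\beta}e^{\beta\mu}$. To upgrade this to uniform convergence on compact subintervals $[\delta,M]\subset\mathbb{R}_{+}$, I would use the assumption in the form ``for every $\varepsilon>0$ there is $\mu_{0}$ with $|v(\mu)e^{-\beta\mu}/c_{\beta}-1|<\varepsilon$ for $\mu>\mu_{0}$'': then for $m>\mu_{0}$ (so that $m+\mu>\mu_{0}$ for all $\mu\ge\delta>0$) we get $|v_{m}(\mu)-c_{\beta}e^{\beta\mu}|=e^{\beta\mu}|v(m+\mu)e^{-\beta(m+\mu)}-c_{\beta}|\le e^{\beta M}\varepsilon$ (or $e^{|\beta| M}\varepsilon$ if $\beta<0$), uniformly in $\mu\in[\delta,M]$. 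The limit $c_{\beta}e^{\beta\mu}$ is strictly positive when $\beta\le 0$ (IFR) and strictly negative when $\beta>0$ (DFR), matching the monotone-hazard type of the $v_{m}$, which is preserved by the shift transformation.

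The remaining hypothesis of Theorem~\ref{vntov} is the left (respectively right) tightness condition (\ref{tight}). In the IFR case ($\beta\le 0$) the domains $\Psi_{m}=\mathbb{R}_{+}$ all have left endpoint $0$, and the relevant integral is $\int_{0}^{\eta}\mu/v_{m}(\mu)\,d\mu=e^{\beta m}\int_{0}^{\eta}\mu/v(m+\mu)\,d\mu$. Since $v$ is continuous and strictly positive on $\mathbb{R}_{+}$ with a finite positive limit $v(0+)$ (by the characterization argument, though here we need only a positive lower bound on a neighborhood of $0$), and since by (\ref{exponentiala}) $v(m+\mu)\ge \tfrac12 c_{\beta}e^{\beta(m+\mu)}$ for $m$ large (when $\beta\le 0$ this gives $v(m+\mu)\ge \tfrac12 e^{\beta(m+\mu)}\ge \tfrac12 e^{\beta(m+\eta)}$), the integrand is bounded by $2\mu e^{-\beta\mu}e^{\beta m}e^{-\beta m}=2\mu e^{-\beta\mu}$, uniformly in $m$, and hence the integral over $(0,\eta)$ can be made smaller than any $k>0$ by choosing $\eta$ small. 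The DFR case ($\beta>0$) is the mirror image: the domains have right endpoint $+\infty$, one uses the right version $I_{m}(\eta)=(\eta,\infty)$, and the bound $|v(m+\mu)|=-v(m+\mu)\ge \tfrac12 e^{\beta(m+\mu)}$ makes $\int_{\eta}^{\infty}\mu/|v_{m}(\mu)|\,d\mu\le 2\int_{\eta}^{\infty}\mu e^{-\beta\mu}\,d\mu$, uniformly in $m$, which tends to $0$ as $\eta\to\infty$; in particular it is eventually below $k$. I expect this tightness verification to be the main obstacle, since one must be careful to get the bound on $v(m+\mu)$ uniformly in $m$ over the whole tail interval rather than just asymptotically. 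With all hypotheses of Theorem~\ref{vntov} checked, it follows that $\mathrm{XD}_{m}(\mu,\lambda e^{\beta m})$ converges weakly (uniformly on compacts of the support) to the hazard location family with slope function $\mu\mapsto c_{\beta}e^{\beta\mu}$, which by Theorem~\ref{exponential} (and the explicit models (\ref{IFR}) and (\ref{DFR})) is the extreme dispersion model with exponential slope function, completing the proof.
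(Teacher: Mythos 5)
Your proposal is correct and follows essentially the same route as the paper's proof: compute the unit slope function $e^{-\beta m}v(m+\mu)$ of the shifted model, upgrade the pointwise limit $c_{\beta}e^{\beta\mu}$ to uniform convergence on compacts via the asymptotics (\ref{exponentiala}), check the tightness condition (\ref{tight}) by bounding the integrand by a multiple of $\mu e^{-\beta\mu}$ over $(0,\eta)$ or $(\eta,\infty)$ as appropriate, and invoke Theorem \ref{vntov}. Your explicit uniform-in-$m$ lower bound on $|v(m+\mu)|$ in the tightness step is a slightly more careful rendering of what the paper states only asymptotically, and your closing appeal to Theorem \ref{exponential} to name the limit is an inessential addition.
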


\begin{proof}
The shifted model $\mathrm{XD}_{m}(\mu ,\lambda e^{\beta m})$ has unit slope
function 
\begin{equation}
e^{-\beta m}v(m+\mu )\rightarrow c_{\beta }e^{\beta \mu }\text{ for }%
m\rightarrow \infty \text{,}  \label{exslope}
\end{equation}%
pointwise for $\mu >0$. To show that the convergence is uniform in $\mu $ on
compact subsets of $\mathbb{R}_{+}$, let $0<\mu <m_{0}$ for given $m_{0}>0$.
For given $\varepsilon >0$ let $\mu _{0}$ be such that 
\begin{equation*}
\left\vert e^{-\beta \mu }v(\mu )-c_{\beta }\right\vert <\varepsilon
\end{equation*}%
for $\mu >\mu _{0}$, by (\ref{exponentiala}). Then for any $m>\mu _{0}$ we
find 
\begin{equation*}
\left\vert e^{-\beta m}v(m+\mu )-c_{\beta }e^{\beta \mu }\right\vert
=e^{\beta \mu }\left\vert e^{-\beta \left( m+\mu \right) }v(m+\mu )-c_{\beta
}\right\vert \leq \left( 1+e^{\beta m}\right) \varepsilon
\end{equation*}%
for all $\mu <m_{0}$, showing uniform convergence. The tightness condition (%
\ref{tight}) involves the integral%
\begin{equation*}
\int_{I_{m}(\eta )}\frac{\lambda \mu }{\left\vert e^{-\beta m}v(m+\mu
)\right\vert }\,d\mu \text{,}
\end{equation*}%
where the integrand behaves asymptotically like $\mu e^{-\beta \mu }$. For $%
\beta >0$ the interval of integration is $(\eta ,\infty )$, whereas for $%
\beta \leq 0$ it is $(0,\eta )$, so in both cases the tightness condition is
satisfied. The result now follows from Theorem \ref{vntov}.
\end{proof}

There are three main cases of (\ref{exslope}). \emph{DFR case}. Take $\beta
=1$ and let $m$ be such that $n=e^{m}$ is an integer. We may then write the
convergence as follows:\ 
\begin{equation}
\mathrm{XD}_{\log n}(\mu ,\lambda n)\overset{w}{\rightarrow }\mathrm{XD_{-}}%
(\mu ,\lambda )\text{ for }n\rightarrow \infty \text{,}  \label{XDlog}
\end{equation}%
where $\mathrm{XD_{-}}(\mu ,\lambda )$ is the model generated by (\ref{DFR}%
). The left-hand side of (\ref{XDlog}) represents a shift transformation of
the scaled min $\hat{Y}_{n}$ for a sample of size $n$ from $\mathrm{XD}(\mu
,\lambda )$. \emph{Rayleigh case.} For $\beta =0$ we obtain convergence to
the Rayleigh distribution, 
\begin{equation*}
\mathrm{XD}_{m}(\mu ,\lambda )\overset{w}{\rightarrow }\mathrm{EV}_{-%
{\frac12}%
}(\mu ,\lambda )\text{ for }m\rightarrow \infty \text{.}
\end{equation*}%
\emph{IFR case.} Take $\beta =-1$ and let $t=e^{-m}$. Then 
\begin{equation*}
\mathrm{XD}_{-\log t}(\mu ,\lambda t)\overset{w}{\rightarrow }\mathrm{XD_{+}}%
(\mu ,\lambda )\text{ for }t\downarrow 0\text{,}
\end{equation*}%
where $\mathrm{XD_{+}}(\mu ,\lambda )$ is the model generated by (\ref{IFR}%
). This in effect involves the asymptotic distribution of an extremal
process $X_{t}$ for $t\downarrow 0$, much like the infinitely divisible type
of convergence of J\o rgensen (1997, p.~149). This follows by noting that to
every $\mathrm{XD}(\mu ,\lambda )$ model there exists an extremal process $%
X_{t}$, in the sense of Dwass (1964), such that $tX_{t}\sim \mathrm{XD}(\mu
,\lambda t)$.

\section*{Appendix: Proof of general convergence theorem\label{secApp}}

The proof of Theorem \ref{vntov} proceeds along the same lines as J\o %
rgensen's (1997, p.~54) proof of the convergence theorem for variance
functions, which in turn is a simplification of Mora's (1990) proof in the
multivariate case. The idea is to reconstruct the hazard function $h$ from
the limiting slope function $v$ using (\ref{hv}), and in turn use the
uniform convergence and tightness to show convergence of the sequence $H_{n}$%
.

Let $K$ be a given compact subinterval of $\Psi $. By assumption $\Psi =%
\mathrm{int}\left( \lim \Psi _{n}\right) $, so we may assume that $%
K\subseteq \Psi _{n}$ from some $n_{0}$ on. We only need to consider $%
n>n_{0} $. Fix a $\mu _{0}\in \mathrm{int}\,K$. Let $\psi _{n}=h_{n}^{-1}$
denote the inverse hazard function given by $\psi _{n}^{\prime }\left( \mu
\right) =1/v_{n}(\mu )$ on $\Psi _{n}$ and $\psi _{n}\left( \mu _{0}\right)
=0$, cf.~(\ref{hv}). Let $h_{n}$, $H_{n}$ etc. denote the quantities
associated with this parametrization.

Similarly, define $\psi :\Psi \rightarrow \mathbb{R}$ by $\psi ^{\prime
}\left( \mu \right) =1/v(\mu )$ on $\Psi $ and $\psi (\mu _{0})=0$. Then for 
$\mu \in K$ 
\begin{equation}
\left\vert \psi _{n}^{\prime }\left( \mu \right) -\psi ^{\prime }\left( \mu
\right) \right\vert =\frac{\left\vert v_{n}(\mu )-v(\mu )\right\vert }{%
v_{n}(\mu )v(\mu )}\text{.}  \label{psin}
\end{equation}%
By the uniform convergence of $v_{n}(\mu )$ to $v(\mu )$ on $K$, it follows
that $v_{n}(\mu )$ is uniformly bounded on $K$. Since $v(\mu )$ is bounded
on $K$, it follows from (\ref{psin}) and from the uniform convergence of $%
v_{n}$ that $\psi _{n}^{\prime }\left( \mu \right) \rightarrow \psi ^{\prime
}\left( \mu \right) $ uniformly on $K$. This and the fact that $\psi
_{n}\left( \mu _{0}\right) =\psi (\mu _{0})$ for all $n$ implies, by a
result from Rudin (1976, Theorem 7.17, p.~152), that $\psi _{n}\left( \mu
\right) \rightarrow \psi \left( \mu \right) $ uniformly on $K$.

Let $\mathcal{C}_{n}=\psi _{n}\left( \Psi _{n}\right) $ and $\mathcal{C}%
=\psi (\Psi )$. Then $\mathcal{C}=\mathrm{int}\left( \lim \mathcal{C}%
_{n}\right) $. Let $J=\psi (K)\subseteq \mathcal{C}$ and $J_{n}=\psi
_{n}(K)\subseteq \mathcal{C}_{n}$. Define $h:\mathcal{C}\rightarrow \Psi $
by $h(y)=\psi ^{-1}(y)$. Since $\psi $ is strictly monotone and
differentiable, the same is the case for $h$, and $h(y)>0$ on $\mathcal{C}$
since $\Psi \subseteq \mathbb{R}_{+}$. Let $\mu \in K$ be given and let $%
y=\psi (\mu )\in J$ and $y_{n}=\psi _{n}(\mu )\in J_{n}$. Since $v_{n}(\mu )$
is uniformly bounded on $K$, there exists an $m$ such that $\left\vert
v_{n}(\mu )\right\vert \leq m$ for all $n$ and $\mu \in K$. It follows that $%
\left\vert h_{n}^{\prime }(y)\right\vert \leq m$ for all $y\in J$. Since $%
\mu =h(y)=h_{n}(y_{n})$ we find, using the mean value theorem, that 
\begin{eqnarray*}
\left\vert h_{n}(y)-h(y)\right\vert &=&\left\vert
h_{n}(y)-h_{n}(y_{n})\right\vert \\
\ &\leq &m\left\vert y-y_{n}\right\vert \\
\ &=&m\left\vert \psi (\mu )-\psi _{n}(\mu )\right\vert \text{.}
\end{eqnarray*}%
This implies that $h_{n}(y)\rightarrow h(y)$ uniformly in $y\in J$. The
above arguments also apply if $J$ is extended to a larger subinterval of $%
\mathcal{C}$.

In order to invoke the tightness condition (\ref{tight}), we first consider
the IFR case. Using (\ref{substitute}) we obtain for $c\in \mathcal{C}$ 
\begin{equation}
H_{n}(c)=\int_{\underline{\eta }_{n}}^{h_{n}(c)}\frac{\mu }{\left\vert
v_{n}(\mu )\right\vert }\,d\mu \text{.}  \label{hnc}
\end{equation}%
For a given $\eta \in \Psi $ we may choose $\varepsilon >0$ and $c\in 
\mathcal{C}$ such that $h(c)+\varepsilon <\eta $, and from the convergence
of $h_{n}(c)$ to $h(c)$ we obtain $\underline{\eta }_{n}<h_{n}(c)<\eta $ for 
$n$ large enough. Together with (\ref{tight}) this implies that for every $%
k>0$ there exists a $c=c(k)\in \mathcal{C}$ such that for all $n$ 
\begin{equation}
0\leq H_{n}(c)\leq k\text{.}  \label{hight}
\end{equation}%
Since all $H_{n}$ are increasing we can make $c(k)$ an increasing function
of $k$. In the DFR case the inequality (\ref{hight}) follows similarly by
integrating over the interval $(h_{n}(c),\overline{\eta }_{n})$ in (\ref{hnc}%
).

The condition (\ref{hight}) implies that there exists a $c\in \mathcal{C}$
such that 
\begin{equation}
\lim \inf_{n\rightarrow \infty }\int_{-\infty }^{c}h_{n}(x)\,dx=\lim
\inf_{n\rightarrow \infty }H_{n}(c)<\infty \text{.}  \label{liminf}
\end{equation}%
By Fatou's lemma, (\ref{liminf}) implies that $\int_{-\infty
}^{c}h(x)\,dx<\infty $. We may now define $G$ and $H$ for $y\in \mathbb{R}$
by $H\left( y\right) =\int_{-\infty }^{y}h(x)\,dx$, and $G(y)=\exp \left\{
-H\left( y\right) \right\} $, where we use the conventions discussed in
connection with (\ref{H(y)}). Then $G$ is a survival function with support $%
\mathcal{C}$, and $H(\inf \mathcal{C})=0$. Using the above-mentioned result
from Rudin once more, we find that for any given $d$ in $J$, $H_{n}\left(
y\right) -H_{n}(d)$ converges to $H\left( y\right) -H(d)$ uniformly in $y\in
J$.

To conclude the proof, we choose a $d\in J$, and show that $H_{n}(d)$
converges to $H(d)$. The tightness condition (\ref{hight}) implies that, for
given $k>0$ and $c\leq c(k)$, $H_{n}(d)$ satisfies 
\begin{equation*}
H_{n}(d)-H_{n}\left( c\right) \leq H_{n}(d)\leq H_{n}(d)-H_{n}\left(
c\right) +k\text{.}
\end{equation*}%
We may enlarge $J$ to include $c$. Letting $n\rightarrow \infty $ we find
that $H_{n}(d)$ is asymptotically squeezed between the values $H(d)-H\left(
c\right) $ and $H(d)-H\left( c\right) +k$, which can be made arbitrarily
close to $H(d)$ by choosing $k$ small, and $c$ close to $\inf \mathcal{C}$.
Hence $H_{n}(d)$ converges to $H(d)$. It follows that $H_{n}\left( y\right) $
converges to $H\left( y\right) $ uniformly in $y\in J$, completing the proof.

\section*{Acknowledgements}

The research was supported by the Danish Natural Science Research Council.

\section*{References}

\begin{itemize}
\item Aalen, O.O., Heterogeneity in survival analysis, Statist. Med. \textbf{%
7}, 1121--1137, (1988).

\item Bar-Lev, S.K. and Enis, P., Reproducibility and natural exponential
families with power variance functions, Ann. Statist. \textbf{14},
1507--1522, (1986).

\item Beirlant, J., Goegebeur, Y., Segers, J. and Teugels, J., Statistics of
Extremes: Theory and Applications, Chichester: Wiley, 2004.

\item Casalis, M., Natural exponential families. In Kotz, S., Balakrishnan,
N., and Johnson, N.L. (ed.), Continuous Multivariate Distributions, Vol. 1:
Models and Applications, John Wiley \& Sons, New York, pp.~659--696, 2000.

\item Coles, S.G., An Introduction to Statistical Modeling of Extreme
Values, Springer-Verlag, London, 2001.

\item de Haan, L., On Regular Variation and Its Application to the Weak
Convergence of Sample Extremes, Mathematical Centre Tracts No. 32,
Mathematisch Centrum, Amsterdam, 1970.

\item Dwass, M., Extremal processes. Ann. Math. Statist. \textbf{35},
1718--1725, (1964).

\item Falk, M. and Marohn, F., von Mises conditions revisited. Ann. Probab. 
\textbf{21}, 1310--1328, (1993).

\item Fisher, R.A. and Tippett, L.H.C., Limiting forms of the frequency
distribution of the largest or smallest member of a sample. Proc. Cambridge
Phil. Soc. \textbf{24}, 180--190, (1928).

\item Gnedenko, B.V., Sur la distribution limite du terme maximum d'une s%
\'{e}rie al\'{e}atorire. Ann. Math. \textbf{44}, 423--453, (1943).

\item Hougaard, P., Survival models for heterogeneous populations derived
from stable distributions, Biometrika \textbf{73}, 387--396, (1986).

\item J\o rgensen, B., Exponential dispersion models (with discussion), J.
Roy. Statist. Soc. Ser. B \textbf{49}, 127--162, (1987).

\item J\o rgensen, B., The Theory of Dispersion Models, Chapman \& Hall,
London, 1997.

\item J\o rgensen, B., and Mart\'{\i}nez, J.R., Tauber theory for infinitely
divisible variance functions, Bernoulli \textbf{3}, 213--224, (1997).

\item J\o rgensen, B., Mart\'{\i}nez, J.R. and Tsao, M. Asymptotic behaviour
of the variance function, Scand. J. Statist. \textbf{21}, 223--243, (1994).

\item Klugman, S.A., Panjer, H.H. and Willmot, G.E., Loss Models: From Data
to Decisions, 2nd ed., Wiley Interscience, Hoboken, N.J., 2004.

\item Kotz, S. and Nadarajah, S., Extreme Value Distributions: Theory and
Applications, Imperial College Press, London, 2002.

\item Kokonendji, C.C., Exponential families with variance functions in $%
\sqrt{\Delta }P(\sqrt{\Delta })$: Seshadri's class, Test \textbf{3},
123--172, (1994).

\item Lauritzen, S.L., Thiele: Pioneer in Statistics, Oxford University
Press, Oxford, 2002.

\item Letac, G. and Mora, M., Natural real exponential families with cubic
variance functions, Ann. Statist. \textbf{18}, 1--37, (1990).

\item McCullagh, P. and Nelder, J.A., Generalized Linear Models, 2nd Ed.,
Chapman and Hall, London, 1989.

\item Mora, M., Convergence of the variance functions of natural exponential
families, Ann. Fac. Sci. Univ. Toulouse, S\'{e}rie 5 \textbf{11}, 105--120,
(1990).

\item Morris, C.N., Models for positive data with good convolution
properties, Memo no. 8949. California: Rand Corporation, (1981).

\item Morris, C.N., Natural exponential families with quadratic variance
functions, Ann. Statist. \textbf{10}, 65--80, (1982).

\item Nadarajah, S. and Kotz, S., The exponentiated type distributions, Acta
Appl. Math. \textbf{92}, 97--111, (2006).

\item Nelson, W. and Doganaksoy, N., Statistical analysis of life or
strength data from specimens of various sizes using the power-(log) normal
model. In Recent Advances in Life Testing and Reliability (ed. Balakrishnan,
N.), pp.~377--408, 1995.

\item Rudin, W., Principles of Mathematical Analysis, McGraw-Hill, New York,
1976.

\item Sarabia, J.M. and Castillo, E., About a class of max-stable families
with applications to income distributions, Metron LXIII, 505--527, (2005).

\item Tweedie, M.C.K., An index which distinguishes between some important
exponential families, In Ghosh, J.K. and Roy, J. (ed.), Statistics:
Applications and New Directions. Proceedings of the Indian Statistical
Institute Golden Jubilee International Conference, Indian Statistical
Institute, Calcutta, pp.~579--604, 1984.

\item Vaupel, J.W., Manton, K.G. and Stallard, E. The impact of
heterogeneity in individual frailty on the dynamics of mortality. Demography
16, 439--54, (1979). 
\end{itemize}

\end{document}